 \newtheorem{thm}{Theorem}[section]
 \newtheorem{lem}[thm]{Lemma}
 \newtheorem{prop}[thm]{Proposition}
 \theoremstyle{definition}
 \newtheorem{rem}[thm]{Remark}
 \numberwithin{equation}{section}
\begin{document}
\title{Global Solutions of Evolutionary Faddeev Model
With Small Initial Data}
\author{Zhen Lei\footnote{School of Mathematical Sciences; LMNS and Shanghai Key
  Laboratory for Contemporary Applied Mathematics, Fudan
  University, Shanghai 200433, P. R. China. {\it Email:
  leizhn@yahoo.com}}
\and Fanghua Lin\footnote{Courant Institute of Mathematics, New
  York University, USA. {\it Email: linf@cims.nyu.edu}} \and
  Yi Zhou\footnote{School of Mathematical Sciences; Shanghai Key Laboratory
  for Contemporary Applied Mathematics,  Fudan
  University, Shanghai 200433, P. R. China. {\it Email: yizhou@fudan.ac.cn}}}

\maketitle \ \ \ \ \ \ \ \ \ \ \ ---------------In Memory of
Professor Hua, Luo-Geng---------------
\maketitle
\begin{abstract}
We consider the Cauchy problem for evolutionary Faddeev model
corresponding to maps from the Minkowski space $\mathbb{R}^{1 +
n}$ to the unit sphere $\mathbb{S}^2$, which obey a system of
non-linear wave equations. The nonlinearity enjoys the null
structure and contains semi-linear terms, quasi-linear terms and
unknowns themselves. We prove that the Cauchy problem is globally
well-posed for sufficiently small initial data in Sobolev space.
\end{abstract}
\textbf{Keywords}: Faddeev model, global existence, quasi-linear
wave equations, semi-linear wave equations.

\section{Introduction}

Denote an arbitrary point in $(n + 1)$-dimensional Minkowski space
$\mathbb{R} \times \mathbb{R}^n$ by $z = (t, x) = (x^\alpha)_{0
\leq \alpha \leq n}$, the space-time derivatives of a function by
$$\partial = (\partial_t, \nabla) = (\partial_\alpha)_{0 \leq \alpha \leq n}.$$
We raise and lower indices with the Minkowski metric $\eta =
(\eta_{\alpha\beta}) = \eta^{-1} = (\eta^{\alpha\beta}) = {\rm
diag}(1, - 1, - 1, - 1)$.

To describe the Faddeev model, let us consider Sobolev mappings from the
Minkowski space $(\mathbb{R} \times
\mathbb{R}^n, \eta)$, $n \geq 2$ to the unit sphere
$\mathbb{S}^2$:
\begin{equation}\label{z1}
\textbf{n}: (\mathbb{R} \times \mathbb{R}^n, \eta) \rightarrow
\mathbb{S}^2
\end{equation}
and the Lagrangian density governing the evolution of the fields
$\textbf{n}$ (see Faddeev \cite{Faddeev1, Faddeev2, Faddeev3}):
\begin{equation}\nonumber
\mathcal{L}(\textbf{n}) = \frac{1}{2}\partial_\mu\textbf{n}\cdot
\partial^\mu\textbf{n} - \frac{1}{4}(\partial_\mu\textbf{n}
\wedge \partial_\nu\textbf{n})(\partial^\mu\textbf{n} \wedge
\partial^\nu\textbf{n}).
\end{equation}
Then solutions of the Faddeev model can be characterized
variationally as critical points of the action integral
\begin{equation}\label{z2}
\mathcal{A}(\textbf{n}) = \int_{\mathbb{R} \times
\mathbb{R}^n}\mathcal{L}(\textbf{n})dxdt.
\end{equation}
The equations of motion of the Faddeev model takes the form,
\begin{equation}\label{Faddeev-1}
\textbf{n} \wedge \partial_\mu\partial^\mu\textbf{n}
  + \big[\partial_\mu\big(\textbf{n}\cdot[\partial^\mu\textbf{n} \wedge
  \partial^\nu \textbf{n}]\big)\big]\partial_\nu\textbf{n} = 0,
\end{equation}
which is the Euler-Lagrange equation of $\mathcal{L}(\textbf{n})$
 in local coordinates(see Faddeev \cite{Faddeev1, Faddeev2, Faddeev3} and Lin-Yang
\cite{Lin-Yang1} and references therein).

The Faddeev model \eqref{Faddeev-1} was introduced to model
elementary particles by using continuously extended, topologically
characterized, relativistically invariant, locally concentrated,
soliton-like fields. The model is not only important in the area of
quantum field theory but also provides many interesting and challenging
mathematical problems, see for examples \cite{RS}, \cite{S}, \cite{Wa},
\cite{Ri},\cite{M}, \cite{Es}, \cite{Cho} and \cite{MSS}.
There have been a lot of interests in recent years in studying
mathematical issues of static Faddeev model (see Lin-Yang \cite
{Lin-Yang2, Lin-Yang3, Lin-Yang4, Lin-Yang5} and review papers by
Faddeev \cite{Faddeev3} and Lin-Yang \cite{Lin-Yang1}). However,
the corresponding evolutionary equations \eqref{Faddeev-1}, which
turn out to be unusual quasi-linear wave equations, are still
untouched to our best knowledge(see also Lin-Yang \cite{Lin-Yang1}).

In the case of $n \geq 3$, there are now classical and well developed
theories on global well-posedness for quasi-linear wave equations
with null structure and small initial data, see for examples,
Christodoulou and Klainerman \cite{CK}, Lindblad-Rodnianski
\cite{Lindblad-Rhdnianski}, Sideris \cite{Sideris}.
Such theories can be easily employed to solve the evolutionary system
of the Faddeev model also when  $n \geq 3$. The aim of
this paper is to prove the global well-posedness of the Cauchy
problem of the Fadeev model \eqref{Faddeev-1} in $\mathbb{R}^{1 +
2}$ under the assumption that the initial data is small in some
generalized Sobolev space. These results provide a starting point for
further studies of evolutions of intereacting particle like approximate
solutions, see \cite{RS} and \cite{MSS}.

\begin{thm}\label{thm}
Suppose that $n_{10}, n_{20}, n_{11}, n_{21} \in
C_0^\infty(\mathbb{R}^2)$ with $s \geq 9$ and
\begin{equation}\nonumber
\|n_{10}\|_{H^{s + 2}},\ \ \|n_{20}\|_{H^{s + 2}},\ \
\|n_{11}\|_{H^{s + 1}},\ \ \|n_{21}\|_{H^{s + 1}} \leq \epsilon,\
\ n_{30} = \sqrt{1 - n_{10}^2 - n_{20}^2}.
\end{equation}
Then there exists a small positive constant $\epsilon_0$ such that
the Faddeev model \eqref{Faddeev-1} with the initial data
\begin{equation}\nonumber
n_i(0, x) = n_{i0}(x),\ \partial_t n_j(0, x) = n_{j1}(x),\ \ 1
\leq i \leq 3,\ \ 1 \leq j \leq 2
\end{equation}
is well-posed globally in time provided that $\epsilon \leq
\epsilon_0$.
\end{thm}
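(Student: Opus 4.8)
The plan is to exploit the null structure of \eqref{Faddeev-1} together with a weighted energy method adapted to two space dimensions. First I would use the constraint $|\textbf{n}| = 1$ to reduce \eqref{Faddeev-1} to a determined system for $u = (n_1, n_2)$, writing $n_3 = \sqrt{1 - n_1^2 - n_2^2}$, so that $n_3 - 1$ and hence $\partial n_3$ are already quadratic in $(u, \partial u)$. Taking the wedge product of \eqref{Faddeev-1} with $\textbf{n}$ and using $\textbf{n}\cdot\partial_\mu\partial^\mu\textbf{n} = - \partial_\mu\textbf{n}\cdot\partial^\mu\textbf{n}$, then expanding
\begin{equation}\nonumber
\partial_\mu\big(\textbf{n}\cdot[\partial^\mu\textbf{n}\wedge\partial^\nu\textbf{n}]\big) = \textbf{n}\cdot[\partial_\mu\partial^\mu\textbf{n}\wedge\partial^\nu\textbf{n}] + \textbf{n}\cdot[\partial^\mu\textbf{n}\wedge\partial_\mu\partial^\nu\textbf{n}]
\end{equation}
(the term $\partial_\mu\textbf{n}\cdot[\partial^\mu\textbf{n}\wedge\partial^\nu\textbf{n}]$ vanishing identically), one arrives at a quasi-linear system of the schematic form
\begin{equation}\nonumber
\partial_\mu\partial^\mu u^I = N^I(u, \partial u) + g^{\alpha\beta}_{J}(u)\,\partial_\alpha\partial_\beta u^J + R^I, \qquad I = 1, 2,
\end{equation}
where $N^I$ is a linear combination, with coefficients smooth near $u = 0$, of the null forms $\partial_\gamma u^J\partial^\gamma u^K$ and $\partial_\alpha u^J\partial_\beta u^K - \partial_\beta u^J\partial_\alpha u^K$ produced by the antisymmetry of the wedge product, $g^{\alpha\beta}_J(u) - \eta^{\alpha\beta}\delta^I_J = O(|u|^2)$ with the quasi-linear correction again of null type, and $R^I$ collects cubic and higher order terms. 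Verifying that no quadratic term violates the null condition — in particular tracking the contributions coming from $\partial n_3$ — is the first point requiring care.

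Next I would run the vector-field method with Alinhac's ghost weight. Let $\Gamma$ range over the generators $\partial_\alpha$, the rotation $\Omega = x^1\partial_2 - x^2\partial_1$, the Lorentz boosts $L_i = t\partial_i + x_i\partial_t$, and the scaling $S = t\partial_t + x\cdot\nabla$, which satisfy $[\partial_\mu\partial^\mu, \Gamma] \in \{0, -2\,\partial_\mu\partial^\mu\}$ and map null forms to linear combinations of null forms modulo lower order. For $N \leq s$ define the weighted generalized energy and its ghost space-time term
\begin{equation}\nonumber
\mathcal{E}_N(t) = \sum_{|a| \leq N}\int_{\mathbb{R}^2} e^{q(|x| - t)}\,|\partial\Gamma^a u(t,x)|^2\, dx, \qquad q(\sigma) = \int_{-\infty}^{\sigma}\langle\tau\rangle^{-1-\delta}\,d\tau,
\end{equation}
\begin{equation}\nonumber
\mathcal{D}_N(t) = \sum_{|a| \leq N}\int_0^t\!\!\int_{\mathbb{R}^2} e^{q(|x|-\tau)}\,\frac{|\bar\partial\Gamma^a u(\tau,x)|^2}{\langle |x| - \tau\rangle^{1+\delta}}\, dx\, d\tau,
\end{equation}
where $\bar\partial$ denotes the derivatives tangent to the light cone. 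Commuting the reduced system with $\Gamma^a$, multiplying by $e^{q(|x|-t)}\partial_t\Gamma^a u$, symmetrizing the quasi-linear principal part, and integrating by parts yields an inequality $\frac{d}{dt}\mathcal{E}_N + c\,\mathcal{D}_N \leq |\text{nonlinear contributions}|$.

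The decay is then supplied by the two-dimensional Klainerman--Sobolev inequality $\langle t + |x|\rangle^{1/2}|\Gamma^a u(t,x)| \lesssim \sum_{|b|\leq |a|+2}\|\Gamma^b u(t)\|_{L^2}$, together with the gain for good derivatives $\langle t + |x|\rangle\,|\bar\partial\Gamma^a u| \lesssim \sum_{|b|\leq |a|+1}|\Gamma^b u| + \langle t-|x|\rangle\,|\partial\Gamma^a u|$. I would set up a continuity/bootstrap argument on the a priori assumptions $\mathcal{E}_N(t) \leq (C_0\epsilon)^2$ and $\mathcal{D}_N(t) \leq (C_0\epsilon)^2$ (allowing, if necessary, a slow growth $\langle t\rangle^{C_0\epsilon}$ at the very top order): each quadratic null form is split so that the good derivative it produces is placed into $\mathcal{D}_N$ while the remaining factor is put in $L^\infty$ by Klainerman--Sobolev, gaining $\langle\tau\rangle^{-1/2}$ and a weight $\langle |x| - \tau\rangle^{-1/2}$ that is absorbed by the ghost term; the cubic and higher terms $R^I$ and the terms carrying the undifferentiated unknown are genuinely lower order and controlled the same way after using the extra decay of the additional factor. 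This closes the estimates for $\epsilon \leq \epsilon_0$, and combined with the standard local well-posedness theory for quasi-linear wave equations and a continuation argument gives a global solution; the smallness (and compact support) of the data keeps $|u|$ small for all time, so the parametrization $n_3 = \sqrt{1 - n_1^2 - n_2^2}$ remains valid throughout.

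I expect the main obstacle to be the interaction of the quasi-linear, second-order nonlinear terms with the energy estimate in the critical two-dimensional setting: after commuting with $\Gamma^a$ and symmetrizing, one must show that the dangerous contributions of $g^{\alpha\beta}_J(u)\partial_\alpha\partial_\beta u^J$ are exactly of the type absorbed by the ghost space-time norm $\mathcal{D}_N$ — and, at the same time, that no quadratic term of the original Faddeev nonlinearity escapes the null-form classification — since it is precisely this null structure that compensates for the weak $t^{-1/2}$ decay of free waves in $\mathbb{R}^{1+2}$.
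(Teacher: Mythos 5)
There is a genuine gap, and it sits exactly at the point the paper identifies as the main difficulty of this problem. Your bootstrap quantities $\mathcal{E}_N$ and $\mathcal{D}_N$ control only $\partial\Gamma^a u$ (with a ghost weight), not $\Gamma^a u$ itself. But the Faddeev nonlinearity, written out in the reduced coordinates (see \eqref{Faddeev-2}), contains cubic semi-linear terms of the schematic form $(\partial_\mu\textbf{n}\cdot\partial^\mu\textbf{n})\,n_i$ in which one factor is the \emph{undifferentiated} unknown. At top order the vector fields can land entirely on that factor, so the energy identity produces $\|\Gamma^\alpha n_i\|_{L^2}$ with $|\alpha|=s$; moreover the two-dimensional Klainerman--Sobolev inequality you invoke for the $\langle t\rangle^{-1/2}$ decay of $u$ itself also requires $\|\Gamma^b u\|_{L^2}$ as input. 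Neither quantity is controlled by $\mathcal{E}_N$ or $\mathcal{D}_N$, and in $\mathbb{R}^{1+2}$ the $L^2$ norm of the solution of even the free wave equation grows like $t^{1/2}$, so these terms are not ``genuinely lower order and controlled the same way'': dismissing them is precisely the step that fails. The authors state explicitly that Alinhac's ghost-weight method is difficult to apply to nonlinearities of this kind in two space dimensions, and the reason is this missing control of the unknown itself.

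The paper closes this gap with machinery your proposal does not contain: a three-tier bootstrap \eqref{E} that tracks separately (i) the energy of $\partial^i\textbf{n}$, (ii) the norm $\|(n_1,n_2)\|_{\Gamma,s,L^2}\leq M\epsilon(1+t)^{1/2+2\delta}$, and (iii) the pointwise decay $\|(n_1,n_2)\|_{\Gamma,s-2,L^\infty}\leq M\epsilon(1+t)^{-1/2}$. Item (iii) is obtained from the $L^1$--$L^\infty$ estimates of Klainerman and H\"ormander (Theorems \ref{thmb12} and \ref{thmb15}) applied to the Duhamel formula, not from Klainerman--Sobolev; item (ii) is obtained from the new $L^2$ estimate of Theorem \ref{thmb22}, which uses the mixed norms $L^{p,q}$ of \eqref{a11} and the splitting $\chi_1,\chi_2$ inside and outside the light cone to exploit, respectively, the faster decay near the origin and the extra decay from the null structure far from the cone. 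Only with (ii) and (iii) in hand does the top-order energy estimate of Section 4 (which handles the quasi-linear terms by integration by parts and the Hardy-type Lemma \ref{lemc13}, rather than by a ghost weight) close. If you want to salvage your route, you must at minimum add the analogues of (ii) and (iii) to your bootstrap and prove an independent $L^2$ estimate for the undifferentiated unknown; as written, your scheme cannot close.
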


The nonlinearity in Faddeev model \eqref{Faddeev-1} (see also
system \eqref{Faddeev-2} in section 3 and system \eqref{Faddeev-3}
in section 4) enjoys the so-called null structure, which can be
used to explore better decay estimates of solutions, see
\cite{Christodoulou, Klainerman, Sideris, Alinhac}. In the two space
dimension, it seems the best result was due to Alinhac
\cite{Alinhac}, where the author introduced the so-called "ghost weights"
in the energies and proved a global existence result for a
class of quasi-linear wave equations (without terms which are semi-linear
and involving unknowns themselves) with small initial values and
null conditions. For quasi-linear wave equations whose nonlinearities are
cubic, and involve only the derivatives of unknowns, we refer the reader
to Li Tatsien \cite{Li} or Hoshiga \cite{Hoshiga}. One notices, however,
that the nonlinearity in the Faddeev model \eqref{Faddeev-1} contains both
semi-linear and quasi-linear terms where the semi-linear terms are
cubic  and involve the unknowns themselves (see
\eqref{Faddeev-2} in section 3). Technically, it becomes much more
complicated since the estimates for unknowns themselves can not be
obtained by the usual Klainerman's generalized energy estimates. We also
find that Alinhac's method is difficult to apply to such kind of
nonlinearities in the two space dimension.

To prove Theorem \ref{thm}, We will need the following \textit{a
priori} estimates:
\begin{thm}\label{APrioriE}
Let $s \geq 9$ and $(n_1, n_2)$ be a global classical solution to
\eqref{Faddeev-1} with initial data $n_0$ which is given in
Theorem \ref{Faddeev-1}. Then there holds
\begin{equation}\label{E}
\begin{cases}
\sum_{i = 1}^2\big\|\partial^i\textbf{n}(t, \cdot)\big\|_{\Gamma,
  s, L^2} \leq M\epsilon(1 + t)^{\delta},\\[-4mm]\\
\big\|\big(n_1(t, \cdot), n_2(t, \cdot)\big)\big\|_{\Gamma, s,
L^2}
  \leq M\epsilon(1 + t)^{\frac{1}{2} + 2\delta},\\[-4mm]\\
\big\|\big(n_1(t, \cdot), n_2(t, \cdot)\big)\big\|_{\Gamma, s - 2,
  L^\infty} \leq M\epsilon(1 + t)^{- \frac{1}{2}},
\end{cases}
\end{equation}
for some appropriately small positive constant $\delta$ and some
positive constant $M$ provided that the initial data satisfies
\begin{equation}\label{p0}
\|n_{10}\|_{H^{s + 2}},\ \ \|n_{20}\|_{H^{s + 2}},\ \
\|n_{11}\|_{H^{s + 1}},\ \ \|n_{21}\|_{H^{s + 1}} \leq \epsilon
\end{equation}
for sufficiently small positive constant $\epsilon$.
\end{thm}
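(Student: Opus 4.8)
The plan is a global continuity (bootstrap) argument for the three interlocking bounds in \eqref{E}. Let $T_*$ be the supremum of times on which \eqref{E} holds with the constant $M$ replaced by $2M$; assuming $T_*<\infty$ we will recover \eqref{E} with the constant $M$ on $[0,T_*]$ once $\epsilon$, and hence $M\epsilon$, is small enough, which by local theory and continuity contradicts the definition of $T_*$ (this also removes any blow-up and underlies Theorem~\ref{thm}). The preliminary step is algebraic: using $|\mathbf n|=1$ and $n_3=\sqrt{1-n_1^2-n_2^2}$ one rewrites \eqref{Faddeev-1} as a system of nonlinear wave equations for $(n_1,n_2)$ (this is \eqref{Faddeev-2}), whose right-hand sides are finite sums of semilinear cubic terms $n_j\,Q(n_k,n_l)$, quasilinear terms $n_j\,Q(n_k,\partial n_l)$ with coefficients analytic in $\mathbf n$, and higher-order corrections, where each $Q$ is one of the null forms $Q_0(u,v)=\partial_\mu u\,\partial^\mu v$ or $Q_{\alpha\beta}(u,v)=\partial_\alpha u\,\partial_\beta v-\partial_\beta u\,\partial_\alpha v$. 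The null structure, inherited from the wedge products and from differentiating the constraint, is essential and must survive after commuting with the vector fields.

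Next come the generalized energy estimates for $\partial\mathbf n$, giving the first line of \eqref{E}. Apply $\Gamma^a$, $|a|\le s$, where $\Gamma$ runs over $\partial_\alpha$, the rotation and Lorentz boosts $x_\alpha\partial_\beta-x_\beta\partial_\alpha$, and the scaling $t\partial_t+x\cdot\nabla$; since $[\partial_\mu\partial^\mu,\Gamma]$ is a multiple of $\partial_\mu\partial^\mu$ plus first-order $\Gamma$-terms, and null forms are stable under $\Gamma$ modulo linear combinations of null forms, each $\Gamma^a n_i$ solves a wave equation of the same type. The standard generalized-energy method for quasilinear wave equations (the quasilinear part being absorbed into a modified second-order operator with small null-form-type coefficients, so that no derivative is lost) gives $\frac{d}{dt}\big(\sum_i\|\partial^i\mathbf n\|_{\Gamma,s,L^2}\big)\lesssim\|\text{source}\|_{L^2}$. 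For the quasilinear null terms one exploits that in $Q_0,Q_{\alpha\beta}$ at least one factor may be taken to be a good derivative (tangent to the light cone), controlled by $\langle t+r\rangle^{-1}$ times first-order $\Gamma$'s, and together with Klainerman--Sobolev and the bootstrap assumption one bounds the source by $\frac{C(M\epsilon)}{1+t}\sum_i\|\partial^i\mathbf n\|_{\Gamma,s,L^2}$ plus a time-integrable remainder; Gronwall then yields $\lesssim\epsilon(1+t)^{cM\epsilon}\le M\epsilon(1+t)^\delta$ provided $cM\epsilon\le\delta$. The semilinear cubic terms $n_j\,Q(n_k,n_l)$ are handled by placing the undifferentiated factor $n_j$ (or its top-order $\Gamma$-derivative) in $L^\infty$ (resp.\ $L^2$) via the third (resp.\ second) line of \eqref{E}, the null-form gain on $Q(n_k,n_l)$ supplying the extra $\langle t+r\rangle^{-1}$ that makes the time factor integrable; this is exactly where the coupling to the other two estimates enters.

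The genuinely new difficulty is the control of $(n_1,n_2)$ themselves, i.e.\ the second and third lines of \eqref{E}. The energy identities only govern $\partial\Gamma^a n_i$, and naive integration $\Gamma^a n_i(t)=\Gamma^a n_i(0)+\int_0^t\partial_t\Gamma^a n_i$ loses a full power of $t$ relative to what \eqref{E} demands. Instead one first proves the pointwise decay $\|(n_1,n_2)\|_{\Gamma,s-2,L^\infty}\lesssim M\epsilon(1+t)^{-1/2}$ by applying an explicit decay (Friedlander/fundamental-solution) estimate for the inhomogeneous two-dimensional wave equation $\partial_\mu\partial^\mu(\Gamma^a n_i)=F^i_a$, $|a|\le s-2$: the free part is $\lesssim\epsilon(1+t)^{-1/2}$ by the compact support and smallness of the data, and the Duhamel part is estimated against the $2$D wave kernel, the null structure of $F^i_a$ — crucially including the cubic $n_j$-terms, for which the bootstrap bounds are again invoked — guaranteeing the time integral converges and produces $O((M\epsilon)^2(1+t)^{-1/2})$. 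The $L^2$ bound $\|(n_1,n_2)\|_{\Gamma,s,L^2}\lesssim M\epsilon(1+t)^{1/2+2\delta}$ then follows for $|a|\le s-2$ from finite speed of propagation (the solution is supported in $\{|x|\le t+R\}$, so $\|\Gamma^a n_i(t)\|_{L^2}\lesssim(1+t)\|\Gamma^a n_i(t)\|_{L^\infty}$) and for $s-2<|a|\le s$ by integrating the equation combined with a weighted Hardy-type estimate, the surplus $\Gamma$-losses being absorbed into the factor $(1+t)^{2\delta}$.

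\textbf{The main obstacle.} The hard part is the previous paragraph: the $2$D inhomogeneous wave equation decays only like $t^{-1/2}$ and its kernel is supported on the characteristic cone, so the Duhamel estimate is far more delicate than the $t^{-1}$ estimates available when $n\ge3$, and one must verify by hand that every nonlinear interaction — in particular the cubic terms carrying an undifferentiated $n_j$, which the usual Klainerman machinery does not reach — decays fast enough, after the null structure is used, to be integrated against that kernel. Finally $\delta$ must be fixed small enough, and then $\epsilon_0$ small enough, that the Gronwall exponent $cM\epsilon$ and the cascade of losses $\delta\mapsto2\delta\mapsto\cdots$ incurred above all remain strictly below the exponents posited in \eqref{E}, closing the bootstrap.
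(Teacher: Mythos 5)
Your overall architecture (bootstrap on the three coupled bounds, null-form decomposition of the source, Hörmander's $L^1$--$L^\infty$ estimate for the pointwise decay, generalized energy estimates with a modified energy for the quasilinear part) matches the paper. The genuine gap is in your treatment of the second estimate, $\|(n_1,n_2)\|_{\Gamma,s,L^2}\le M\epsilon(1+t)^{\frac12+2\delta}$, at the top orders. Your finite-speed-of-propagation argument $\|\Gamma^a n_i\|_{L^2}\lesssim(1+t)\|\Gamma^a n_i\|_{L^\infty}$ only reaches $|a|\le s-2$, because that is where the $L^\infty$ bound lives; for $s-2<|a|\le s$ you offer ``integrating the equation combined with a weighted Hardy-type estimate,'' but this cannot produce the stated growth rate. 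Integrating $\partial_t\Gamma^a n_i$ in time costs a full power of $t$ over the energy bound, giving $(1+t)^{1+\delta}$, and the Hardy inequality in the $t-|x|$ variable (which is what is available here; the radial two-dimensional Hardy inequality fails at the origin) only controls $\big\|\Gamma^a n_i/(\rho+|t-|x||)\big\|_{L^2}$ by $\|\nabla\Gamma^a n_i\|_{L^2}\lesssim M\epsilon(1+t)^{\delta}$, and undoing the weight again costs a factor $(1+t)$ near the spatial origin. Either way you land at $(1+t)^{1+\delta}$, half a power worse than needed. This is not a removable loss: the top-order norm $\|(n_1,n_2)\|_{\Gamma,s,L^2}$ enters with exponent $\frac12+2\delta$ in the estimates of the $L^1$, $L^{\frac43}$ and $L^{1,2}$ norms of the cubic sources (precisely where all $s$ vector fields fall on an undifferentiated factor), so with a $(1+t)^{1+\delta}$ bound the time integrals in both the $L^\infty$-decay step and the energy step no longer converge and the bootstrap does not close.

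The paper's resolution --- and its main technical novelty --- is a sharp $L^2$ estimate for the two-dimensional wave equation (Theorem \ref{thmb22}):
$\|u(t)\|_{L^2}\le\|u_0\|_{L^2}+C(1+t)^{\frac12}\big\{\|u_1\|_{L^{\frac43}}+\int_0^t\big(\|f\|_{L^{\frac43},\chi_1}+(1+\tau)^{-\frac12}\|f\|_{L^{1,2},\chi_2}\big)d\tau\big\}$,
proved on the Fourier side via the embedding $L^{\frac43}\hookrightarrow H^{-1}$ in two dimensions and, outside the light cone, a duality argument in the mixed radial--angular norm $L^{1,2}$. The $(1+t)^{\frac12}$ prefactor is extracted from the propagator $\sin(t|\xi|)/|\xi|$ itself rather than from the support of the solution, so the estimate applies at all orders $|a|\le s$; the null structure then supplies the integrability of $\|f\|_{L^{\frac43},\chi_1}$ inside the cone and the extra decay of $\|f\|_{L^{1,2},\chi_2}$ outside it. You would need this lemma, or a substitute of equal strength, for your plan to go through.
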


To show the energy estimates of unknowns themselves (see the second
inequality in \eqref{E}), we shall use the following refined norms
as in \cite{LiYu}:
\begin{equation}\nonumber
\|f\|_{L^{p, q}} = \Big(\int_0^\infty\|f(r\xi)\|_{L^q(S^{n -
1})}^pr^{n - 1}dr\Big)^{\frac{1}{p}}.
\end{equation}
Using this norm, we are able to get essentially optimal $L^2$ estimates
for unknowns themselves:
\begin{eqnarray}\nonumber
&&\|u(t, \cdot)\|_{L^2} \leq \|u(0, \cdot)\|_{L^2} + C(1 +
  t)^{\frac{1}{2}}\Big\{\|\partial_tu(0, \cdot)\|_{L^{\frac{4}{3}}}\\\nonumber
&&\quad + \int_0^t\Big(\|\Box u(\tau, \cdot)\|_{L^{\frac{4}{3}},
\chi_1} + (1 + \tau)^{-\frac{1}{2}}\|\Box u(\tau, \cdot)\|_{L^{1,
2}, \chi_2}\Big)d\tau\Big\},
\end{eqnarray}
where $\chi_1$ is the characteristic function of $\{x: |x| \leq 1
+ \frac{\tau}{2}\}$ and $\chi_2 = 1 - \chi_1$. The proof of the
above estimate is presented in Theorem \ref{thmb22}. See
\eqref{a11} and \eqref{a12} for the definitions of the norms
appearing on the right hand side of the above inequality. The
crucial point in this \textit{a priori} estimate is that it allows
us to take the advantage of the faster time decay of $u$ in the
region of ${\rm supp\chi_1}$ and extra time decay of $u$ in the
complement of ${\rm supp\chi_1}$ which is usually due to the null
structure of nonlinearities. The above $L^2$ estimate combining
with the best $L^1-L^\infty$ estimate (see Theorem \ref{thmb12}
and Theorem \ref{thmb15}) by Klainerman \cite{Klainerman1} and
H${\rm \ddot{o}}$rmander \cite{Hormander2} allows us to be able to
get the \textit{a priori} estimate in \eqref{E}. We also point out
that our Lemma \ref{lemc13} is not covered in the Lemma 4.1 of
\cite{Alinhac}.

The analysis in this paper can be used to deal with
nonlinear wave equations with semi-linear terms, quasi-linear
terms involving unknowns themselves as well. The method
can likely be also adopted to study the sharp lifespan of nonlinear
wave equations $\Box u = F(u, \partial u, \partial^2u)$ in two and
three space dimensions with both semi-linear terms and quasi-linear
terms that may contain unknowns themselves.

The paper is organized as follows: In section 2, we review some
basic estimates for solutions of linear wave
equations and the notion of null forms. We prove then the  $L^2$ estimate
for solutions of the linear wave equations. The second and the third
\textit{a priori} estimates in the Theorem \ref{APrioriE} are established
in section 3. In the final section 4 we prove the first \textit{a
priori} estimate in Theorem \ref{APrioriE}.

\section{Preliminaries and Estimates for The Linear Wave Equations}

After some preliminary discussions, we shall prove certain energy
estimates for solutions of the linear wave equations which are
essential for establishing the inequalities in
Theorem \ref{APrioriE}. We first introduce several notations:
\begin{equation}\label{a11}
\begin{cases}
\|f\|_{L^{p, q}} = \Big(\int_0^\infty\|f(r\xi)\|_{L^q(S^{n -
1})}^pr^{n - 1}dr\Big)^{\frac{1}{p}},\\[-4mm]\\
\|f\|_{L^{\infty, q}} = \sup_{r \geq 0}\|f(r\xi)\|_{L^q(S^{n -
1})}.
\end{cases}
\end{equation}
It is easy to see that
\begin{equation}\nonumber
\|f\|_{L^{p, p}} = \|f\|_{L^p}.
\end{equation}

For any integer $s \geq 0$, real numbers $1 \leq p, q \leq \infty$
and any characteristic function $\psi(t, x)$, we will denote
\begin{equation}\label{a12}
\begin{cases}
\|u(t, \cdot)\|_{\Gamma, s, L^{p, q}, \psi} =
  \sum_{|k| \leq s}\|\psi(t, \cdot)\Gamma^ku(t, \cdot)\|_{L^{p, q}},\\[-4mm]\\
\|u(t, \cdot)\|_{\Gamma, s, L^{p, q}} =
  \sum_{|k| \leq s}\|\Gamma^ku(t, \cdot)\|_{L^{p, q}}.
\end{cases}
\end{equation}

Here as in Klainerman \cite{Klainerman1},  we use the
following vector fields(operators):
\begin{equation}\label{c1-1}
\Gamma = (\partial, L, \Omega),
\end{equation}
where
\begin{equation}\nonumber
\begin{cases}
\partial = (\partial_\alpha)_{0 \leq \alpha \leq n}
  = (\partial_t, \nabla),\\[-4mm]\\
\Omega = (\Omega_{ij})_{1 \leq i, j \leq n,\ i \neq j},
  \quad \Omega_{ij} = x_i\partial_{j} - x_j\partial_{i},\\[-4mm]\\
L_0 = t\partial_t + r\partial_r = x_\alpha\partial_\alpha,\quad
L_i = t\partial_{i} + x_i\partial_t.
\end{cases}
\end{equation}
Denote the wave operator by $\Box = \partial_t^2 - \Delta$ and the
Possion product by $[\cdot, \cdot]$.

First of all, it is easy to check that the following Proposition
holds:
\begin{prop}\label{propa11}
For any multi-index $\alpha$, we have
\begin{equation}\label{a13}
[\Box, \Gamma^\alpha] = \sum_{|\beta| \leq |\alpha| -
1}C_{\alpha\beta}\Gamma^\beta\Box,\quad [\partial, \Gamma^\alpha]
= \sum_{|\beta| \leq |\alpha| -
1}C_{\alpha\beta}^\prime\Gamma^\beta\partial
\end{equation}
for some constants $C_{\alpha\beta}$ and $C_{\alpha\beta}^\prime$.
\end{prop}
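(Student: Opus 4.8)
The plan is to prove the commutator identities in \eqref{a13} by induction on $|\alpha|$, after first establishing the base case $|\alpha| = 1$ by direct computation for each of the vector fields listed in \eqref{c1-1}.

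\textbf{Base case.} For a single vector field $\Gamma \in \{\partial_\alpha, \Omega_{ij}, L_0, L_i\}$ I would compute $[\Box, \Gamma]$ explicitly. The translations $\partial_\alpha$ obviously commute with $\Box$ (constant-coefficient operators), so $[\Box, \partial_\alpha] = 0$; likewise the rotations $\Omega_{ij}$ and the Lorentz boosts $L_i$ are Killing fields for the Minkowski metric, hence commute with the d'Alembertian: $[\Box, \Omega_{ij}] = [\Box, L_i] = 0$. The only field producing a nontrivial commutator is the scaling $L_0 = x_\alpha\partial_\alpha$, for which a short calculation gives $[\Box, L_0] = 2\Box$. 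In every case $[\Box, \Gamma]$ is a constant-coefficient combination of $\Box$ composed with vector fields of strictly lower order (here order $0$), which is the $|\alpha|=1$ instance of the first identity. For the second identity, $[\partial_\beta, \partial_\alpha] = 0$, $[\partial_\beta, \Omega_{ij}]$ and $[\partial_\beta, L_i]$ are (constant-coefficient) linear combinations of first-order derivatives $\partial_\gamma$, and $[\partial_\beta, L_0] = \partial_\beta$; again each is of the asserted form with $|\beta| \le |\alpha| - 1 = 0$.

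\textbf{Inductive step.} Assume \eqref{a13} holds for all multi-indices of length $\le m$, and write $\Gamma^\alpha = \Gamma_{i}\Gamma^{\alpha'}$ with $|\alpha'| = m$. Using the Leibniz-type rule $[\Box, AB] = [\Box, A]B + A[\Box, B]$, expand
\begin{equation}\nonumber
[\Box, \Gamma^\alpha] = [\Box, \Gamma_i]\Gamma^{\alpha'} + \Gamma_i[\Box, \Gamma^{\alpha'}].
\end{equation}
The first term is a constant multiple of $\Box\,\Gamma^{\alpha'}$ by the base case; commuting $\Box$ past $\Gamma^{\alpha'}$ once more via the induction hypothesis turns it into $\sum_{|\beta|\le m}C\,\Gamma^\beta\Box$ with $|\beta| \le m = |\alpha|-1$. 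The second term, by the induction hypothesis, equals $\Gamma_i\sum_{|\beta|\le m-1}C_{\alpha'\beta}\Gamma^\beta\Box = \sum_{|\beta'|\le m}C\,\Gamma^{\beta'}\Box$, again with $|\beta'| \le |\alpha|-1$. Summing gives the claimed form. The argument for $[\partial, \Gamma^\alpha]$ is identical, replacing $\Box$ by $\partial$ and using the base-case identities for $[\partial, \Gamma_i]$; note that the algebra $\{\partial_\gamma\}$ is closed under bracketing with every $\Gamma$, so no operators outside the list $\Gamma$ appear.

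\textbf{Main obstacle.} There is no real analytic difficulty here; the only thing to be careful about is bookkeeping of the multi-index orders — one must check that each reduction step lowers the order of the $\Gamma$-factor appearing to the \emph{left} of $\Box$ (resp.\ $\partial$) by exactly one, so that the final sum ranges over $|\beta| \le |\alpha| - 1$ and not $|\beta| \le |\alpha|$. This is automatic from the structure of the two brackets computed in the base case, each of which trades one vector field for a constant, so the induction closes cleanly.
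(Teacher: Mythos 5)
Your proof is correct and is exactly the standard argument the paper has in mind: the paper offers no proof at all, merely asserting the identities are "easy to check," and your base-case computations ($[\Box,\Gamma]=0$ for translations, rotations and boosts, $[\Box,L_0]=2\Box$, and the corresponding first-order brackets for $\partial$) together with the induction on $|\alpha|$ supply precisely the missing verification. The bookkeeping point you flag — that each reduction trades one vector field for a constant so the sum closes at $|\beta|\leq|\alpha|-1$ — is handled correctly.
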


Concerning Klainerman's vector fields in \eqref{c1-1}, one also
has following Proposition:
\begin{prop}\label{propa12}
There exists a positive constant $C$ such that
\begin{equation}\label{a14}
|\partial^su(t, x)| \leq C\big(1 + \big|t-
|x|\big|\big)^{-s}\sum_{|\alpha| \leq s}|\Gamma^\alpha u(t, x)|
\end{equation}
holds for all smooth function $u(t, x)$.
\end{prop}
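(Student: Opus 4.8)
The inequality is algebraic in nature: away from the light cone each ordinary derivative $\partial_\alpha$ should be expressible through the generators $L_0,L_i,\Omega_{ij}$ divided by $t^2-|x|^2$, with coefficients of size $O(t+|x|)$, and this produces exactly the gain of one power of $\big(1+\big|t-|x|\big|\big)^{-1}$; near the light cone the weight is harmless and one simply uses that $\partial\subset\Gamma$. I would organize the argument in three steps and then iterate.

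First I would record the elementary identities
\begin{equation*}
\big(t^2-|x|^2\big)\partial_t = tL_0-\sum_{i=1}^n x_iL_i,\qquad
\big(t^2-|x|^2\big)\partial_j = tL_j-x_jL_0-\sum_{i=1}^n x_i\Omega_{ij}\quad(1\le j\le n),
\end{equation*}
obtained by direct expansion of the right-hand sides, and observe that every coefficient appearing there is one of $t,x_1,\dots,x_n$, hence bounded by $t+|x|$. Dividing by $t^2-|x|^2$ then gives, whenever $t\ne|x|$,
\begin{equation*}
\big|\partial_\alpha u(t,x)\big|\le \frac{C}{\big|t-|x|\big|}\sum_{|\beta|\le 1}\big|\Gamma^\beta u(t,x)\big|.
\end{equation*}

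Second I would settle the case $s=1$ of \eqref{a14}: when $\big|t-|x|\big|\ge 1$ the bound just obtained is already of the desired form since $\big|t-|x|\big|^{-1}\le 2\big(1+\big|t-|x|\big|\big)^{-1}$, while when $\big|t-|x|\big|<1$ the factor $\big(1+\big|t-|x|\big|\big)^{-1}$ is bounded below by $\tfrac12$ and one just uses that each $\partial_\alpha$ is itself a member of $\Gamma$. Third I would prove the general case by induction on $s$: writing an $s$-th order derivative as $\partial_{\gamma_1}\partial^{\gamma'}$ with $|\gamma'|=s-1$, applying the case $s=1$ to the function $\partial^{\gamma'}u$, and then bounding $\Gamma^\beta\partial^{\gamma'}u$ for $|\beta|\le 1$ by the induction hypothesis. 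For $\beta=0$ this is immediate; for $|\beta|=1$ one first commutes $\Gamma^\beta$ past $\partial^{\gamma'}$, which by Proposition \ref{propa11} (equivalently, because each $[\Gamma,\partial]$ is a constant-coefficient combination of the $\partial$'s) only produces terms of the form $\partial^{\gamma'}\Gamma^\beta u$ and lower-order terms $\partial^{\gamma''}u$ with $|\gamma''|=s-1$, to which the induction hypothesis again applies. Multiplying the resulting bound by $C\big(1+\big|t-|x|\big|\big)^{-1}$ closes the induction.

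I do not expect a genuine obstacle here: the whole content sits in the algebraic identities of the first step, and the only points requiring care are the coefficient bounds there and the multi-index bookkeeping in the commutators $[\Gamma^\beta,\partial^{\gamma'}]$ used in the inductive step.
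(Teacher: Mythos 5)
Your proof is correct and follows essentially the same route as the paper: the paper's own (very terse) argument rests on exactly the identities $\partial_t = (tL_0 - x_iL_i)/(t^2-|x|^2)$ and $\partial_j = (tL_j - x_jL_0 - x_k\Omega_{kj})/(t^2-|x|^2)$ away from the light cone, together with the trivial bound when $\big|t-|x|\big|\le 1$. You merely make explicit the induction on $s$ and the commutator bookkeeping that the paper leaves implicit, and both are handled correctly.
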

\begin{proof}
In fact, \eqref{a14} is obvious if $\big|t- |x|\big| \leq 1$.
Otherwise, \eqref{a14} follows from the following expressions:
\begin{equation}\nonumber
\partial_t = \frac{tL_0 - x_iL_i}{t^2 - |x|^2},\quad
\partial_{x_j} = \frac{tL_j - x_jL_0 - x_k\Omega_{kj}}{t^2 -
|x|^2}.
\end{equation}
\end{proof}

Next let us recall the $L^\infty-L^1$ estimate for linear wave
equations, whose proof can be found in Klainerman
\cite{Klainerman3}.
\begin{thm}\label{thmb12}
Assume that $u$ solves the Cauchy problem of the homogeneous
linear wave equation in $\mathbb{R} \times \mathbb{R}^n$:
\begin{equation}\label{b111-1}
\Box u = 0,\quad u(0, x) = u_0(x), u_t(0, x) = u_1(x).
\end{equation}
Then we have
\begin{equation}\label{b16}
\|u(t, \cdot)\|_{L^\infty} \leq \frac{C\big(\|u_0\|_{W^{n, 1}} +
\|u_1\|_{W^{n - 1, 1}}\big)}{(1 + t)^{\frac{n - 1}{2}}}
\end{equation}
for all $t \geq 0$.
\end{thm}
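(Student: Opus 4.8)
The plan is to start from the explicit Fourier (Kirchhoff) representation of the solution of \eqref{b111-1},
$$u(t,x) = \frac{1}{(2\pi)^n}\int_{\mathbb{R}^n} e^{ix\cdot\xi}\Big(\cos(t|\xi|)\,\widehat{u_0}(\xi) + \frac{\sin(t|\xi|)}{|\xi|}\,\widehat{u_1}(\xi)\Big)\,d\xi,$$
and to reduce \eqref{b16} to two kernel estimates. Since $\cos(t|\xi|) = \partial_t\big(\sin(t|\xi|)/|\xi|\big)$, it suffices to control the operator $S(t)f = \mathcal{F}^{-1}\big[\,|\xi|^{-1}\sin(t|\xi|)\,\widehat f\,\big]$: one wants $\|S(t)f\|_{L^\infty} \lesssim (1+t)^{-(n-1)/2}\|f\|_{W^{n-1,1}}$, from which the $u_0$–contribution follows by applying one extra derivative, each $\partial_t$ costing a factor $|\xi|$, i.e. one more derivative on the data — which is exactly why $u_0$ is measured in $W^{n,1}$ and $u_1$ in $W^{n-1,1}$.

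Then I would localize in frequency. Fix $\varphi\in C_0^\infty(\mathbb{R}^n)$ equal to $1$ near the origin and split $\widehat f = \varphi\widehat f + (1-\varphi)\widehat f$. For the low-frequency part the multiplier $|\xi|^{-1}\sin(t|\xi|)\varphi(\xi)$ is compactly supported, so its $x$–kernel is, for each fixed $t$, smooth and rapidly decaying; writing $\sin(t|\xi|) = \mathrm{Im}\,e^{it|\xi|}$ and applying stationary phase to $\int e^{i(x\cdot\xi + t|\xi|)}(\cdots)\,d\xi$ — whose phase has on each sphere $|\xi| = \rho$ a nondegenerate critical manifold of the expected rank $n-1$ — gives an $L^\infty$ kernel bound of order $(1+t)^{-(n-1)/2}$, hence this piece is $\lesssim (1+t)^{-(n-1)/2}\|f\|_{L^1}$ with no loss of derivatives. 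For the high-frequency part I would run a dyadic decomposition $1-\varphi(\xi) = \sum_{j\ge 1}\psi(2^{-j}\xi)$ and rescale $\xi = 2^j\eta$: the $j$-th kernel equals $2^{jn}$ times a fixed profile evaluated at $(2^jt,2^jx)$, and the same stationary-phase analysis (equivalently, the asymptotics of $J_{(n-2)/2}$) bounds its $L^\infty$ norm by $\lesssim 2^{j(n-1)}\big(1+2^jt\big)^{-(n-1)/2}$, and by $\lesssim 2^{j(n-1)}t$ in the range $2^jt\le 1$ using $|\sin(t|\xi|)/|\xi||\lesssim t$. Pairing with the Bernstein inequality $\|P_jf\|_{L^1}\lesssim 2^{-j(n-1)}\|f\|_{W^{n-1,1}}$ and summing the geometric series in $j$ (the range $2^jt\ge1$ sums to $\lesssim t^{-(n-1)/2}$ since $n\ge 2$, while the range $2^jt\le1$ sums to $O(1)$) produces exactly the claimed decay; the $u_0$–term is identical with $\cos(t|\xi|)(1-\varphi)$ and $\|P_jf\|_{L^1}\lesssim 2^{-jn}\|f\|_{W^{n,1}}$, and near $t=0$ one simply invokes $W^{n,1}(\mathbb{R}^n)\hookrightarrow L^\infty$ together with the mild $t$–smoothing of $S(t)$ on $W^{n-1,1}$, so that \eqref{b16} holds uniformly down to $t=0$.

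An equivalent, more physical-space route — closer to Klainerman's original argument — is to use the forward fundamental solution $E_+(t,\cdot)$ of $\Box$: for odd $n$ it is a derivative of the normalized surface measure on the sphere $|x|=t$, and for even $n$ one descends from the odd case by Hadamard's method. Writing $u(t,\cdot) = \partial_t\big(E_+(t,\cdot)\ast u_0\big) + E_+(t,\cdot)\ast u_1$ and converting the $t$– and $r$–derivatives falling on $E_+$ into tangential derivatives along the sphere (Asgeirsson's relation), one estimates $u$ by spherical integrals of $\partial^\alpha u_0,\partial^\alpha u_1$; the surface factor $t^{n-1}$ is then absorbed using the $L^1$ norms of $n$, resp. $n-1$, derivatives together with a Sobolev trace inequality on spheres, separating $t\ge1$ from $t\le1$ as above. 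I expect the main obstacle to be the bookkeeping that pins down the sharp derivative counts ($n$ for $u_0$, $n-1$ for $u_1$) while keeping the estimate uniform as $t\to0$: the dyadic sum over low scales converges only barely, and survives precisely because of the special structure of the multipliers (the factor $|\xi|^{-1}$ and the low-frequency cutoff on the $\cos$ term). In even dimensions the descent also turns the kernel into a genuine density, non-compactly supported in $r$, so the region where $|x|$ is far from $t$ must be treated separately by non-stationary-phase integration by parts in $\xi$ to extract arbitrarily many powers of $(1+t+|x|)^{-1}$.
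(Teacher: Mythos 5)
The paper does not prove Theorem \ref{thmb12} at all: it is quoted as a classical result with a pointer to Klainerman's 1980 paper, so there is no in-text argument to measure yours against. Your Fourier-side route (low/high frequency splitting, stationary phase on the unit-scale profile, rescaling of dyadic blocks, Bernstein, geometric summation) is a correct and standard way to prove the $L^1$--$L^\infty$ decay estimate, and your derivative counts are right: the $j$-th block of the $\sin$-multiplier contributes $(1+2^jt)^{-(n-1)/2}\|u_1\|_{W^{n-1,1}}$, and the series $\sum_{j\ge1}2^{-j(n-1)/2}$ converges precisely because $n\ge2$, giving $t^{-(n-1)/2}$ for $t\ge1$ and $O(1)$ for $t\le1$; with $n$ derivatives the $\cos$-term works identically. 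Your second, physical-space sketch is in fact the one closer to Klainerman's cited argument. Two small bookkeeping points, neither fatal: (i) the crude bound on the $j$-th kernel in the regime $2^jt\le1$ is $t\,2^{jn}$ rather than $t\,2^{j(n-1)}$ (the factor $|\xi|^{-1}\sin(t|\xi|)$ is bounded by $t$, not by $t|\xi|^{-1}$ times something of size one); after pairing with Bernstein this gives $\sum_{2^jt\le1}t\,2^j\lesssim1$, so the conclusion stands. (ii) For the low-frequency $\sin$-piece the stationary-phase bound requires integrating $\rho^{n-2}(\rho t)^{-(n-1)/2}$ down to $\rho=0$, i.e.\ the convergence of $\int_0^1\rho^{(n-3)/2}d\rho$, which again holds exactly because $n\ge2$; this is where the factor $|\xi|^{-1}$ is genuinely borderline and worth a sentence in a full write-up. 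Also, the critical set of the spherical phase consists of two nondegenerate points (with Hessian of rank $n-1$), not a critical manifold, but the resulting decay $(\rho|x|)^{-(n-1)/2}$ is as you state.
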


The following estimate can be found in H${\rm \ddot{o}}$rmander
\cite{Hormander2}.
\begin{thm}\label{thmb15}
Let $u$ solve the Cauchy problem of the inhomogeneous linear wave
equation in $\mathbb{R} \times \mathbb{R}^2$:
\begin{equation}\label{b111}
\Box u = f,\quad u(0, x) = u_t(0, x) = 0.
\end{equation}
Then we have
\begin{eqnarray}\label{b112}
|u(t, x)| \leq \frac{C\int_0^t\|f(\tau, \cdot)\|_{\Gamma, 1,
L^1}(1 + \tau)^{-\big(\frac{1}{2} - l\big)}d\tau}{(1 + t +
|x|)^{\frac{1}{2}}\big(1 + \big|t - |x|\big|\big)^l}.
\end{eqnarray}
Here $0 \leq l \leq \frac{1}{2}$.
\end{thm}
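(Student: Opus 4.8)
The plan is to represent the solution by the fundamental solution of $\Box$ in $\mathbb{R}\times\mathbb{R}^2$ and then exploit the rotational and Lorentz symmetry of the wave equation to convert the resulting integral kernel estimates into the stated decay in $(1+t+|x|)$ and $(1+|t-|x||)$. First I would write, via Duhamel's principle and the explicit two-dimensional kernel,
\begin{equation}\nonumber
u(t,x) = \frac{1}{2\pi}\int_0^t\int_{|y|\le t-\tau}\frac{f(\tau, x-y)}{\sqrt{(t-\tau)^2-|y|^2}}\,dy\,d\tau,
\end{equation}
so that $|u(t,x)|$ is controlled by the same integral with $f$ replaced by $|f|$. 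The task is then to bound, for each fixed source time $\tau$, the spatial convolution of $|f(\tau,\cdot)|$ against the kernel $K_{t-\tau}(y) = \big((t-\tau)^2-|y|^2\big)_+^{-1/2}$ by
$C\,\|f(\tau,\cdot)\|_{\Gamma,1,L^1}\,(1+\tau)^{-(1/2-l)}\,(1+t+|x|)^{-1/2}\,(1+|t-|x||)^{-l}$, after which one integrates in $\tau$.

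The key steps, in order: (1) Reduce to estimating $\big(K_{t-\tau}*|f(\tau,\cdot)|\big)(x)$ pointwise; the singularity of $K_\rho$ on $|y|=\rho$ is integrable in two dimensions, which is why only one angular derivative (the $\Gamma,1$ norm rather than $\Gamma,2$) is needed — the extra regularity is spent controlling $|f|$ near the light cone of the point $(\tau,x-y)$. (2) Split the $y$-integral according to whether $x-y$ lies in the "interior" region $\{|z|\le 1+\tau/2\}$ or the exterior; in the interior one uses $\|f(\tau,\cdot)\|_{L^1}$ directly together with the bound $\int_{|y|\le\rho}K_\rho(y)\,dy = 2\pi\rho$ and the geometric fact that for the convolution to be nonzero one needs $|y|\le t-\tau$, so $t-\tau$ and hence $\rho$ are comparable to $t+|x|$ up to the light-cone distance; in the exterior region one gains decay from $|x-y|\gtrsim 1+\tau$. (3) Use the commutation of the rotation vector field $\Omega$ with $\Box$ and the identity $L_0 = t\partial_t + r\partial_r$ to trade the angular/scaling behavior of $f$ for the factor $(1+|t-|x||)^{-l}$: this is the standard device by which one interpolates between $l=0$ (pure $(1+t+|x|)^{-1/2}$ decay, no information off the cone) and $l=1/2$ (full $(1+t+|x|)^{-1/2}(1+|t-|x||)^{-1/2}$ decay), paying with one unit of $\Gamma$ on $f$ and losing $(1+\tau)^{l}$, equivalently gaining only $(1+\tau)^{-(1/2-l)}$ rather than $(1+\tau)^{-1/2}$ in the interior term. (4) Finally integrate the pointwise-in-$\tau$ bound over $\tau\in[0,t]$ to obtain \eqref{b112}.

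The main obstacle is step (3): carefully tracking how an angular derivative on the kernel variable translates into $\Omega$ acting on $f$, and simultaneously controlling the region near the boundary sphere $|y|=t-\tau$ where $K_{t-\tau}$ blows up, since it is precisely there that the light cone of the observation point $(t,x)$ and the backward cones of the sources interact. One has to localize to an annulus of width $\sim(1+|t-|x||)$ around that sphere, estimate the singular kernel on that annulus, and show the angular regularity of $f$ exactly compensates; getting the exponent $l$ to appear with the correct sign and range $0\le l\le 1/2$ requires a clean interpolation rather than a crude worst-case bound. The interior/exterior split of step (2) and the final $\tau$-integration of step (4) are then routine.
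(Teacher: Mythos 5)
The paper does not prove this theorem at all --- it is quoted from H\"ormander's paper \emph{$L^1$, $L^\infty$ estimates for the wave operator} --- so there is no internal proof to compare against. Judged on its own terms, your proposal identifies the right starting point (Duhamel plus the explicit kernel $K_{t-\tau}(y)=((t-\tau)^2-|y|^2)_+^{-1/2}$) and the right tools (rotations, the scaling and Lorentz fields, an interior/exterior split, interpolation in $l$), but it is a plan rather than a proof: the entire analytic content of the theorem is concentrated in your step (3), which you yourself flag as ``the main obstacle'' and never carry out. Specifically, you must show how exactly one unit of $\Gamma$-regularity of $f$ tames the non-integrable-in-$L^\infty$ singularity of $K_{t-\tau}$ on the sphere $|y|=t-\tau$ \emph{and} simultaneously produces all three factors $(1+t+|x|)^{-1/2}$, $(1+|t-|x||)^{-l}$, $(1+\tau)^{-(1/2-l)}$ with the stated range $0\le l\le\frac12$. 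The standard mechanism is to pass to polar coordinates centered at the origin (not at $x$), write the convolution as $\int_0^{t-\tau}\frac{r}{\sqrt{(t-\tau)^2-r^2}}\bigl(\int_{S^1}\cdots\,d\omega\bigr)dr$, and integrate by parts in $r$ and $\omega$, converting derivatives falling on the kernel into the fields $\partial$, $\Omega$, $L_0$, $L_i$ applied to $f$; none of this appears in the proposal, and asserting that ``the angular regularity of $f$ exactly compensates'' is precisely the statement to be proved.

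A second, more localized problem is the geometric claim in your step (2) that on the support of the convolution ``$t-\tau$ and hence $\rho$ are comparable to $t+|x|$ up to the light-cone distance.'' This is false without further localization of $f$: for $\tau$ close to $t$ and sources $y$ close to $x$ the constraint $|x-y|\le t-\tau$ is satisfied with $t-\tau$ arbitrarily small, and it is exactly in this regime (small $\rho$, very singular kernel) that the estimate is delicate. The $(1+t+|x|)^{-1/2}$ decay there cannot come from the size of the backward cone; it has to come from the $r^{n-1}$ volume weight in the $L^1$ norm of $f$ on the relevant annulus, which is a different argument from the one you sketch. So the approach is viable and standard, but as written the proposal has a genuine gap at its core step, plus an incorrect geometric shortcut in the region decomposition.
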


We will need some Sobolev type inequalities.  The first one is the
well-known Sobolev Imbedding theorem on the unit sphere
$\mathbb{S}^{n - 1}$ centered at the origin:
\begin{thm}\label{thma21}
Let $x = r\xi$, $r = |x|$. Then there holds
\begin{equation}\label{a21}
\begin{cases}
sp > n - 1: |v(x)| = |v(r\xi)| \leq C\sum_{|k| \leq
  s}\|\Omega^kv(r\xi)\|_{L^p_\xi},\\[-4mm]\\
sp < n - 1: \|v(r\xi)\|_{L^q_\xi} \leq C\sum_{|k| \leq
  s}\|\Omega^kv(r\xi)\|_{L^p_\xi},\\ \quad\quad\quad\quad\quad\quad
  \frac{1}{q} = \frac{1}{p} - \frac{s}{n - 1},\\[-4mm]\\
sp = n - 1: \|v(r\xi)\|_{L^q_\xi} \leq C\sum_{|k| \leq
  s}\|\Omega^kv(r\xi)\|_{L^p_\xi},\quad  p \leq q < \infty
\end{cases}
\end{equation}
for all smooth function $v(x)$.
\end{thm}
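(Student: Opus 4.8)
The plan is to recognize \eqref{a21} as the classical Sobolev embedding theorem on the compact Riemannian manifold $\mathbb{S}^{n-1}$, the point being that the rotation fields $\Omega_{ij}=x_i\partial_j-x_j\partial_i$ are precisely a spanning set of tangent vector fields on concentric spheres, so that iterated applications of the $\Omega$'s are comparable to covariant (or chart) derivatives of the same order.

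First I would reduce to the unit sphere. Each $\Omega_{ij}$ is homogeneous of degree $0$ and hence commutes with the dilation $x\mapsto rx$; thus, writing $w(\xi):=v(r\xi)$ for $\xi\in\mathbb{S}^{n-1}$, one has $(\Omega^k v)(r\xi)=\Omega^k w(\xi)$, so both sides of each inequality in \eqref{a21} coincide with the corresponding quantity for $w$ on the unit sphere, with its standard surface measure. Hence it suffices to prove the three statements for $r=1$, the constant $C$ then depending only on $n,s,p$ (and $q$).

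Next I would establish the geometric fact underlying the norm comparison. At every point $\xi\in\mathbb{S}^{n-1}$ the vectors $\Omega_{ij}|_\xi=\xi_i e_j-\xi_j e_i$ lie in $T_\xi\mathbb{S}^{n-1}=\xi^\perp$ and span it — e.g. at the north pole $\xi=e_n$ one has $\Omega_{in}|_{e_n}=-e_i$ for $i<n$, and the general case follows by rotation invariance. So $\{\Omega_{ij}\}$ is a redundant frame of constant rank $n-1$, and using a partition of unity subordinate to a finite atlas of $\mathbb{S}^{n-1}$ one can write each coordinate field $\partial_{y^a}$ as $\sum_{i<j}c^a_{ij}\,\Omega_{ij}$ with $c^a_{ij}\in C^\infty$, and conversely express each $\Omega_{ij}$ through the $\partial_{y^a}$. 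Iterating and expanding, for every $|k|\le s$ the operator $\Omega^k$ is a combination of coordinate derivatives of order $\le s$ with smooth bounded coefficients, and vice versa; since the atlas is finite, one obtains on the unit sphere
\[
\sum_{|k|\le s}\|\Omega^k w\|_{L^p(\mathbb{S}^{n-1})}\ \sim\ \|w\|_{W^{s,p}(\mathbb{S}^{n-1})},
\]
the usual Sobolev norm of the compact $(n-1)$-dimensional manifold $\mathbb{S}^{n-1}$. Finally I would invoke the classical Sobolev embedding on a compact manifold of dimension $n-1$ (equivalently, localize by the partition of unity and apply the Morrey/Gagliardo–Nirenberg–Sobolev inequalities in $\mathbb{R}^{n-1}$): $W^{s,p}\hookrightarrow C^0\subset L^\infty$ when $sp>n-1$; $W^{s,p}\hookrightarrow L^q$ with $\tfrac1q=\tfrac1p-\tfrac{s}{n-1}$ when $sp<n-1$; and $W^{s,p}\hookrightarrow L^q$ for all $p\le q<\infty$ when $sp=n-1$. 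Combined with the norm equivalence, this yields exactly the three cases of \eqref{a21}.

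The only genuinely substantive step is the second one — the bookkeeping that converts iterated $\Omega$'s into honest $s$-th order derivatives with controlled coefficients; the borderline case $sp=n-1$ additionally requires the sharp (Trudinger-type) form of the embedding, but only in its weaker $L^q$, $q<\infty$, version, which already follows from Gagliardo–Nirenberg–Sobolev and interpolation on the compact sphere. Everything else is a direct appeal to standard Sobolev theory, and no smallness or structure of the Faddeev nonlinearity enters here.
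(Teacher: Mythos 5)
Your proof is correct. The paper itself offers no argument for this theorem --- it is quoted as the ``well-known Sobolev Imbedding theorem on the unit sphere'' --- and your route (reduction to $r=1$ via the degree-zero homogeneity of the $\Omega_{ij}$, the equivalence of $\sum_{|k|\le s}\|\Omega^k w\|_{L^p}$ with the $W^{s,p}(\mathbb{S}^{n-1})$ norm coming from the pointwise spanning of $T_\xi\mathbb{S}^{n-1}$ by the rotation fields, and then the classical embeddings on a compact $(n-1)$-dimensional manifold) is exactly the standard proof the authors are implicitly invoking.
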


The second one is the Sobolev Imbedding theorem in a ball
$\mathbf{B}_\lambda$ with radius $\lambda$ centered at the origin:
\begin{thm}\label{thma22}
Let $\lambda > 0$. Then there exists a positive constant $C$
independent of $\lambda$ such that
\begin{equation}\label{a22}
\begin{cases}
sp > n: \|u\|_{L^\infty(B_\lambda)} \leq
  C\lambda^{-\frac{n}{p}}\sum_{|\alpha| \leq s}\lambda^{|\alpha|}
  \|\nabla^\alpha u\|_{L^p(B_\lambda)},\\[-4mm]\\
sp < n: \|u\|_{L^q(B_\lambda)} \leq C
  \lambda^{-n\big(\frac{1}{p} - \frac{1}{q}\big)}
  \sum_{|k| \leq s}\lambda^{|\alpha|}\|\nabla^\alpha u\|_{L^p(B_\lambda)},\\
  \quad\quad\quad\quad\quad\quad q = \frac{np}{(n - sp)},
  \frac{1}{q} = \frac{1}{p} - \frac{s}{n},\\[-4mm]\\
sp = n: \|u\|_{L^q(B_\lambda)} \leq C
  \lambda^{-n\big(\frac{1}{p} - \frac{1}{q}\big)} \sum_{|k| \leq
  s}\lambda^{|\alpha|}\|\nabla^\alpha u\|_{L^p(B_\lambda)},\\
  \quad\quad\quad\quad\quad\quad  p \leq q < \infty.
\end{cases}
\end{equation}
\end{thm}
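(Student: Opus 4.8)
The plan is to reduce the $\lambda$-dependent estimates on $B_\lambda$ to the classical (unscaled) Sobolev imbedding theorem on the fixed unit ball $B_1$ by a dilation argument. The point is that once the ordinary Sobolev imbedding on $B_1$ is granted, its constant is a fixed number depending only on $n,s,p,q$ and manifestly not on $\lambda$, and the powers of $\lambda$ in \eqref{a22} are then produced automatically by the scaling. By density it suffices to treat $u\in C^\infty(\overline{B_\lambda})$.

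First I would set $v(y)=u(\lambda y)$ for $y\in B_1$, so that $\nabla^\alpha v(y)=\lambda^{|\alpha|}(\nabla^\alpha u)(\lambda y)$. A change of variables $x=\lambda y$ yields the scaling identities
$$\|\nabla^\alpha v\|_{L^p(B_1)}=\lambda^{|\alpha|-\frac{n}{p}}\|\nabla^\alpha u\|_{L^p(B_\lambda)},\qquad \|v\|_{L^q(B_1)}=\lambda^{-\frac{n}{q}}\|u\|_{L^q(B_\lambda)},$$
together with $\|v\|_{L^\infty(B_1)}=\|u\|_{L^\infty(B_\lambda)}$. In the case $sp>n$ I would apply the classical imbedding $\|v\|_{L^\infty(B_1)}\le C\sum_{|\alpha|\le s}\|\nabla^\alpha v\|_{L^p(B_1)}$ on $B_1$ and substitute these identities; the common factor $\lambda^{-n/p}$ pulls out of the sum, leaving the weight $\lambda^{|\alpha|}$ on each term, which is exactly the first line of \eqref{a22}. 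In the cases $sp<n$ and $sp=n$ I would instead invoke $\|v\|_{L^q(B_1)}\le C\sum_{|\alpha|\le s}\|\nabla^\alpha v\|_{L^p(B_1)}$, with $\frac{1}{q}=\frac{1}{p}-\frac{s}{n}$ in the first case and with any finite $q\ge p$ in the borderline case, and substitute again; matching powers of $\lambda$ produces the overall prefactor $\lambda^{-n(1/p-1/q)}$ and the weights $\lambda^{|\alpha|}$, giving the remaining two lines.

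I do not expect a genuine obstacle here: the entire content is that dilation interacts homogeneously with $L^p$ norms and with differentiation, so the scaled inequality is forced by the unscaled one. The only points requiring a little care are (i) that $B_1$ is a bounded Lipschitz (extension) domain, so the full-strength Sobolev imbedding with all lower-order derivatives on the right-hand side holds with a finite constant, and (ii) that in the borderline case $sp=n$ the target space is $L^q$ for every finite $q$, with the constant then permitted to depend on $q$. In all cases nothing in the constant depends on $\lambda$, which is precisely what is asserted.
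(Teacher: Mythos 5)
Your proposal is correct and is exactly the argument the paper uses: the authors likewise reduce to the standard Sobolev imbedding on the unit ball and obtain the powers of $\lambda$ by the dilation $v(y)=u(\lambda y)$, which they summarize as ``a simple scaling technique.'' Your write-up simply makes the scaling identities explicit, and the constant's independence of $\lambda$ follows as you say from the fixed constant on $B_1$.
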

\begin{proof}
For $\lambda = 1$, these are standard Sobolev imbedding
inequalities. When $\lambda \neq 1$, it follows from a simple
scaling technique.
\end{proof}

Let us improve \eqref{a22} to get decaying type inequalities
for smooth function $u(t, x)$ using the norms defined in
\eqref{a11} and \eqref{a12}.

\begin{lem}\label{lema32}
Let $\chi_1$ be the characteristic function of $\big\{x\big| |x|
\leq 1 + \frac{t}{2}\big\}$. Then
\begin{equation}\label{a32}
\begin{cases}
sp > n: \|u(t, \cdot)\|_{L^\infty, \chi_1} \leq C(1 + t)^{-
  \frac{n}{p}}\|u(t, \cdot)\|_{\Gamma, s, L^p, \chi_1}\\[-4mm]\\
sp < n: \|u(t, \cdot)\|_{L^q, \chi_1} \leq C(1 + t)^{- n
  \big(\frac{1}{p} - \frac{1}{q}\big)}\|u(t, \cdot)\|_{\Gamma, s, L^p, \chi_1}\\
  \quad\quad\quad\quad\quad \frac{1}{q} = \frac{1}{p} - \frac{s}{n},\\[-4mm]\\
sp = n: \|u(t, \cdot)\|_{L^q, \chi_1} \leq C(1 + t)^{- n
  \big(\frac{1}{p} - \frac{1}{q}\big)}\|u(t, \cdot)\|_{\Gamma, s, L^p, \chi_1}\\
  \quad\quad\quad\quad\quad p \leq q < \infty.
\end{cases}
\end{equation}
\end{lem}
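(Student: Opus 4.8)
The plan is to reduce the estimates on the region $\{|x| \le 1 + t/2\}$ to the scaled Sobolev inequalities of Theorem~\ref{thma22}, and then to replace the plain derivatives $\nabla^\alpha u$ that appear on the right-hand side of \eqref{a22} by the vector fields $\Gamma^\alpha$. Set $\lambda = 1 + \tfrac{t}{2}$, so that $\mathrm{supp}\,\chi_1 \subset B_\lambda$ and $\lambda \sim 1 + t$. Applying Theorem~\ref{thma22} to $u(t,\cdot)$ on $B_\lambda$ gives, in each of the three regimes, a bound of the form
\begin{equation}\nonumber
\|u(t,\cdot)\|_{L^q(B_\lambda)} \le C \lambda^{-n(\frac{1}{p}-\frac{1}{q})} \sum_{|\alpha| \le s} \lambda^{|\alpha|} \|\nabla^\alpha u(t,\cdot)\|_{L^p(B_\lambda)},
\end{equation}
with the convention $q = \infty$ in the first case; since $\chi_1 \le \mathbf{1}_{B_\lambda}$ and $\chi_1 \nabla^\alpha u$ agrees with $\nabla^\alpha u$ on the support of $\chi_1$, the left side dominates $\|u(t,\cdot)\|_{L^q,\chi_1}$ and it suffices to control each $\lambda^{|\alpha|}\|\nabla^\alpha u\|_{L^p(B_\lambda)}$ by $C\|u(t,\cdot)\|_{\Gamma,s,L^p,\chi_1}$ plus lower-order terms of the same type.

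The key step is the pointwise algebraic identity expressing a weighted spatial derivative in terms of the $\Gamma$ fields. For $|x| \le 1 + t/2$ one has $t - |x| \ge \tfrac{t}{2} - 1$, hence $t \sim 1 + t \sim \lambda$ on this region (after absorbing the bounded piece $|x| \le 2$ separately, where the inequalities are trivial). From $L_j = t\partial_j + x_j\partial_t$ and $L_0 = t\partial_t + x_i\partial_i$ one solves
\begin{equation}\nonumber
\partial_t = \frac{t L_0 - x_i L_i}{t^2 - |x|^2}, \qquad \partial_{j} = \frac{t L_j - x_j L_0 - x_k \Omega_{kj}}{t^2 - |x|^2},
\end{equation}
exactly as in the proof of Proposition~\ref{propa12}; on $\mathrm{supp}\,\chi_1$ the denominator satisfies $t^2 - |x|^2 \ge c\, t^2 \sim c\lambda^2$ while the numerator coefficients $t, x_i$ are $O(\lambda)$, so $\lambda\,|\partial u| \lesssim \sum_{|\alpha|\le 1}|\Gamma^\alpha u|$ pointwise there. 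Iterating this and using that commutators $[\partial,\Gamma]$ and products of the smooth coefficients $t,x_j,(t^2-|x|^2)^{-1}$ with their derivatives remain of the right homogeneity (Proposition~\ref{propa11}), one gets, on $\mathrm{supp}\,\chi_1$,
\begin{equation}\nonumber
\lambda^{|\alpha|}\,|\nabla^\alpha u(t,x)| \le C \sum_{|k| \le |\alpha|} |\Gamma^k u(t,x)|, \qquad |\alpha| \le s.
\end{equation}
Multiplying by $\chi_1$, taking $L^p$ norms, and summing over $|\alpha|\le s$ then yields $\sum_{|\alpha|\le s}\lambda^{|\alpha|}\|\chi_1\nabla^\alpha u\|_{L^p} \le C\|u(t,\cdot)\|_{\Gamma,s,L^p,\chi_1}$; combined with the scaled Sobolev bound and $\lambda^{-1} \le (1+t)^{-1} \le C\lambda^{-1}$ this gives \eqref{a32} in all three cases.

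The main obstacle is bookkeeping rather than conceptual: one must check that differentiating the rational coefficients $t/(t^2-|x|^2)$ and $x_j/(t^2-|x|^2)$ does not spoil the $\lambda$-weighting, i.e. that each spatial derivative either lowers the coefficient degree by one or is compensated by the loss of one power of $\lambda$ from the $\lambda^{|\alpha|}$ factor, and that the region $|x| \le 2$ (where $\lambda \not\sim t$) is disposed of by the plain inequality \eqref{a22} with $\lambda$ bounded. A mild care is also needed because the identities above degenerate on the light cone, but on $\mathrm{supp}\,\chi_1$ we are uniformly inside it, so this causes no trouble. Once the pointwise bound $\lambda^{|\alpha|}|\nabla^\alpha u| \le C\sum_{|k|\le|\alpha|}|\Gamma^k u|$ is established on the relevant region, the rest is a direct substitution into Theorem~\ref{thma22}.
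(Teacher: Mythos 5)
Your proposal is correct and follows essentially the same route as the paper, whose entire proof consists of applying Theorem \ref{thma22} with $\lambda = 1 + \tfrac{t}{2}$ and then invoking Proposition \ref{propa12} (equivalently, the identities $\partial_t = \frac{tL_0 - x_iL_i}{t^2 - |x|^2}$, $\partial_j = \frac{tL_j - x_jL_0 - x_k\Omega_{kj}}{t^2 - |x|^2}$) to convert $\lambda^{|\alpha|}\|\nabla^\alpha u\|_{L^p(B_\lambda)}$ into the $\Gamma$-norms, exactly as you do. The only slight imprecision is that the exceptional region where the identities degenerate is $t$ bounded (there $\mathrm{supp}\,\chi_1$ meets the light cone) rather than $|x|\le 2$, but your remedy --- use \eqref{a22} with $\lambda$ bounded and $\nabla\subset\Gamma$ --- handles that case anyway.
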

\begin{proof}
Letting $\lambda = \frac{t}{2} + 1$ in Theorem \ref{thma22}, and
then using Proposition \ref{propa12}, one can easily check the
above decaying type inequalities.
\end{proof}

The following Lemma involves the estimate of Sobolev norms for
composite functions, which can be easily proved by chain rules and
H${\rm \ddot{o}}$lder inequality.
\begin{lem}\label{lemc12}
Let $\alpha$ be a non-negative integer and $F$ be a smooth
function with $F(w) = O(|w|^{1 + \alpha})$ for $|w| \leq 1$. For
any integer $s \geq 0$ and any characteristic function $\chi$,
there exists a positive constant $C$ such that
\begin{equation}\label{c13}
\begin{cases}
\alpha = 0: \|F(w(t, \cdot))\|_{\Gamma, s, L^{p, q}, \chi} \leq
  C\|w(t, \cdot)\|_{\Gamma, s, L^{p, q}, \chi},\\[-4mm]\\
\alpha \geq 1: \|F(w(t, \cdot))\|_{\Gamma, s, L^{p, q}, \chi} \leq
  C\prod_{i = 1}^\alpha\|w(t, \cdot)\|_{\Gamma, s, L^{p_i, q_i}, \chi}
  \|w(t, \cdot)\|_{\Gamma, s, L^{p_0, q_0}, \chi},\\
\quad\quad\quad \frac{1}{p} = \sum_{i =
0}^\alpha\frac{1}{p_i},\quad \frac{1}{q} = \sum_{i =
0}^\alpha\frac{1}{q_i}
\end{cases}
\end{equation}
holds for all $w$ with $\|w(t, \cdot)\|_{\Gamma, [\frac{s}{2}],
L^\infty} \leq 1$.
\end{lem}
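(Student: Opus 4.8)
The plan is to expand $\Gamma^k\big(F(w)\big)$ by the higher-order chain rule for the Klainerman fields and then distribute the resulting factors among the mixed-norm spaces by H\"older's inequality, using the smallness hypothesis $\|w\|_{\Gamma,[\frac s2],L^\infty}\le1$ to absorb all but at most one high-order factor. First, I would record the expansion: since each component of $\Gamma=(\partial,L,\Omega)$ is a first-order operator whose commutator with any other component is again a constant-coefficient combination of components of $\Gamma$, an induction on $|k|$ gives, for $1\le|k|\le s$,
\[
\Gamma^k\big(F(w)\big)=\sum_{m=1}^{|k|}F^{(m)}(w)\sum_{\substack{|\beta_1|,\dots,|\beta_m|\ge1\\ |\beta_1|+\cdots+|\beta_m|\le|k|}} c_{\beta_1\cdots\beta_m}\,\Gamma^{\beta_1}w\cdots\Gamma^{\beta_m}w ,
\]
with constants $c_{\beta_1\cdots\beta_m}$ depending only on $k$ (for $|k|=0$ the left side is just $F(w)$). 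Since $\|w\|_{\Gamma,[\frac s2],L^\infty}\le1$ forces $|w(t,x)|\le1$, Taylor expansion of $F$ at the origin gives $|F^{(m)}(w)|\le C|w|^{(1+\alpha-m)_+}$, where $(a)_+=\max\{a,0\}$; in particular $|F^{(m)}(w)|\le C$ once $m\ge1$ if $\alpha=0$, and once $m\ge1+\alpha$ in general. Throughout I will use the mixed-norm H\"older inequality $\|fg\|_{L^{p,q}}\le\|f\|_{L^{p_1,q_1}}\|g\|_{L^{p_2,q_2}}$ for $\frac1p=\frac1{p_1}+\frac1{p_2}$, $\frac1q=\frac1{q_1}+\frac1{q_2}$ (apply H\"older first on $S^{n-1}$, then in $r$), together with the fact that $\chi$ is $\{0,1\}$-valued, so $\chi\,|g_0\cdots g_N|=\prod_i(\chi|g_i|)$ pointwise.

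For $\alpha=0$: a term with $m=1$ is $F'(w)\,\Gamma^{\beta_1}w$ with $|\beta_1|\le s$, pointwise $\le C|\Gamma^{\beta_1}w|$, so it contributes at most $C\|w\|_{\Gamma,s,L^{p,q},\chi}$; for $m\ge2$ one has $|F^{(m)}(w)|\le C$, and from $\sum_j|\beta_j|\le s$ with each $|\beta_j|\ge1$ at most one index, say $\beta_1$ after relabelling, can exceed $\frac s2$, hence $\chi|\Gamma^{\beta_j}w|\le\|w\|_{\Gamma,[\frac s2],L^\infty,\chi}\le1$ for $j\ge2$, and H\"older (placing $\Gamma^{\beta_1}w$ in $L^{p,q}$, the remaining factors in $L^\infty$) again gives $\le C\|w\|_{\Gamma,s,L^{p,q},\chi}$. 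Summing the finitely many terms, and treating $|k|=0$ by $|F(w)|\le C|w|$, yields the first inequality.

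For $\alpha\ge1$ the right side carries the $1+\alpha$ norms $\|w\|_{\Gamma,s,L^{p_i,q_i},\chi}$, $i=0,\dots,\alpha$, with $\frac1p=\sum_{i=0}^\alpha\frac1{p_i}$, $\frac1q=\sum_{i=0}^\alpha\frac1{q_i}$. In a term $F^{(m)}(w)\Gamma^{\beta_1}w\cdots\Gamma^{\beta_m}w$ with $m\le1+\alpha$ I would use $|F^{(m)}(w)|\le C|w|^{1+\alpha-m}$, so that the term is pointwise $\le C$ times a product of exactly $1+\alpha$ functions — the $m$ factors $\Gamma^{\beta_j}w$ (all with $|\beta_j|\le s$) and $1+\alpha-m$ copies of $w=\Gamma^0w$ — which I assign bijectively to the $1+\alpha$ slots; H\"older, with $\|\chi\,\Gamma^{\gamma}w\|_{L^{p_i,q_i}}\le\|w\|_{\Gamma,s,L^{p_i,q_i},\chi}$ for $|\gamma|\le s$, gives $C\prod_{i=0}^\alpha\|w\|_{\Gamma,s,L^{p_i,q_i},\chi}$. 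For a term with $m>1+\alpha$ (possible only if $s\ge2+\alpha$) I would use $|F^{(m)}(w)|\le C$ and, again, that at most one index $\beta_1$ exceeds $\frac s2$: put $\Gamma^{\beta_1}w$ in slot $0$ and $\Gamma^{\beta_2}w,\dots,\Gamma^{\beta_{\alpha+1}}w$ in slots $1,\dots,\alpha$ (orders $\le[\frac s2]\le s$), and estimate each of the remaining $m-1-\alpha\ge1$ factors in $L^\infty$ by $\|w\|_{\Gamma,[\frac s2],L^\infty,\chi}\le1$; H\"older then gives $\le C\prod_{i=0}^\alpha\|w\|_{\Gamma,s,L^{p_i,q_i},\chi}$ once more. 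Summing the finitely many terms, with $|k|=0$ handled by $|F(w)|\le C|w|^{1+\alpha}$, completes the proof.

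The one mildly delicate point — the step I would check most carefully — is the bookkeeping in the last case: uniformly over $m$ and over the finitely many decompositions produced by the chain rule, at most one differentiated factor can have order exceeding $\frac s2$, which is exactly what makes $\|w\|_{\Gamma,[\frac s2],L^\infty}\le1$ enough to absorb every surplus factor. This is immediate from $|\beta_1|+\cdots+|\beta_m|\le|k|\le s$ and $|\beta_j|\ge1$ (and, for the integer rounding, from $|\beta_j|\in\mathbb Z$), but it must be verified consistently across all terms.
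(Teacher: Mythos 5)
Your proof is correct and follows exactly the route the paper indicates: the paper offers no written proof of Lemma \ref{lemc12} beyond the remark that it ``can be easily proved by chain rules and H\"older inequality,'' and your argument is precisely that, with the Fa\`a di Bruno expansion in the $\Gamma$ fields, the mixed-norm H\"older inequality, and the hypothesis $\|w\|_{\Gamma,[\frac{s}{2}],L^\infty}\le 1$ used to absorb all but one high-order factor. The bookkeeping point you flag (at most one $|\beta_j|$ can exceed $[\frac{s}{2}]$ since $\sum_j|\beta_j|\le s$) is handled correctly.
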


Finally, let us recall the definition of null structure satisfied
by nonlinearity in nonlinear wave equations. For $0 \leq \alpha,
\beta \leq n$, let
\begin{equation}\nonumber
Q_{\alpha\beta}(f, g) = \partial_\alpha f\partial_\beta g -
\partial_\alpha f\partial_\beta g,\quad Q(f, g) =
\partial_tf\partial_tg - (\nabla f)(\nabla g).
\end{equation}
$Q_{\alpha\beta}(f, g)$ and $Q(f, g)$ are called nonlinearities
with null structure. Concerning the nonlinearities with null
structure, we have
\begin{lem}\label{lemd11}
Let $Q_{\alpha\beta}(f, g)$ and $Q(f, g)$ are nonlinearities with
null structure. Then one has
\begin{eqnarray}\label{d18}
|Q_{\alpha\beta}(f, g)(t, x)| + |Q(f, g)(t, x)| \leq
\frac{C\big(|\Gamma f||Dg| + |Df||\Gamma g|\big)}{1 + t}.
\end{eqnarray}
\end{lem}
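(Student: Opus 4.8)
The plan is to prove \eqref{d18} by a region decomposition combined with the standard fact that a null form is built only from ``good'' derivatives. First dispose of $t\le1$: there $\frac{1}{1+t}\ge\frac12$, and since $Q_{\alpha\beta}(f,g)$ and $Q(f,g)$ are each a finite sum of products of two first-order derivatives, $|Q_{\alpha\beta}(f,g)|+|Q(f,g)|\le C|Df||Dg|\le C(|\Gamma f||Dg|+|Df||\Gamma g|)$, which already gives \eqref{d18}. So assume $t\ge1$, and split $\mathbb{R}^n$ into the interior region $\{|x|\le t/2\}$ and its complement.

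In $\{|x|\le t/2\}$ we have $1+\big|t-|x|\big|=1+t-|x|\ge 1+\tfrac t2\ge\tfrac12(1+t)$, so Proposition \ref{propa12} with $s=1$ gives $|Dh(t,x)|\le \frac{C}{1+t}\sum_{|\alpha|\le1}|\Gamma^\alpha h(t,x)|$ on this set. Bounding, in each product appearing in $Q_{\alpha\beta}$ or $Q$, one factor by this inequality and the other trivially by $|Dg|$ or $|Df|$, we obtain \eqref{d18} in the interior region — no null structure is needed there.

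On the complement $\{|x|>t/2\}$ I would pass to the null frame: set $\omega_i=x_i/|x|$ (defined since $x\ne0$), $\partial_r=\omega_i\partial_i$, and $\bar\partial_i=\partial_i-\omega_i\partial_r$. The identities
\begin{equation}\nonumber
(t+|x|)(\partial_t+\partial_r)=L_0+\omega_iL_i,\qquad |x|\,\bar\partial_i=\omega_j\Omega_{ji}
\end{equation}
together with $t+|x|>\tfrac32 t\ge\tfrac34(1+t)$ and $|x|>\tfrac t2\ge\tfrac14(1+t)$ yield the weighted bounds $|(\partial_t+\partial_r)h|+\sum_i|\bar\partial_i h|\le \frac{C}{1+t}\sum_{|\alpha|\le1}|\Gamma^\alpha h|$. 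Now write each Cartesian derivative in this frame, $\partial_t=\tfrac12(\partial_t+\partial_r)+\tfrac12(\partial_t-\partial_r)$ and $\partial_i=\bar\partial_i+\tfrac{\omega_i}{2}(\partial_t+\partial_r)-\tfrac{\omega_i}{2}(\partial_t-\partial_r)$, i.e. $\partial_\alpha=G_\alpha+c_\alpha(\partial_t-\partial_r)$ with $G_\alpha$ a bounded combination of the good derivatives $\partial_t+\partial_r$ and $\bar\partial_i$, and $c_\alpha$ a bounded scalar. Substituting into $Q_{\alpha\beta}(f,g)=\partial_\alpha f\,\partial_\beta g-\partial_\beta f\,\partial_\alpha g$, the bad$\times$bad contribution $c_\alpha c_\beta(\partial_t-\partial_r)f\,(\partial_t-\partial_r)g-c_\beta c_\alpha(\partial_t-\partial_r)f\,(\partial_t-\partial_r)g$ cancels by antisymmetry in $(\alpha,\beta)$, and every surviving term has a factor $G_\bullet f$ or $G_\bullet g$; estimating that factor by the weighted bound and the remaining factor by $|Df|$ or $|Dg|$ proves the bound for $Q_{\alpha\beta}$. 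For $Q(f,g)$, use $\nabla f\cdot\nabla g=\partial_rf\,\partial_rg+\sum_i\bar\partial_if\,\bar\partial_ig$ and $\partial_tf\,\partial_tg-\partial_rf\,\partial_rg=\tfrac12[(\partial_t+\partial_r)f\,(\partial_t-\partial_r)g+(\partial_t-\partial_r)f\,(\partial_t+\partial_r)g]$, so that $Q(f,g)=\tfrac12[(\partial_t+\partial_r)f\,(\partial_t-\partial_r)g+(\partial_t-\partial_r)f\,(\partial_t+\partial_r)g]-\sum_i\bar\partial_if\,\bar\partial_ig$; each term again carries a good derivative on one factor, and the same two weighted bounds conclude.

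The only substantive point is the cancellation of the bad$\times$bad piece in $Q_{\alpha\beta}$ and the analogous absence of $(\partial_t-\partial_r)f\,(\partial_t-\partial_r)g$ in $Q$ — exactly what the null condition encodes; the rest is the routine verification of the frame identities $(t+|x|)(\partial_t+\partial_r)=L_0+\omega_iL_i$ and $|x|\bar\partial_i=\omega_j\Omega_{ji}$ and of the elementary fact that $(t+|x|)^{-1}$ and $|x|^{-1}$ are $\le C(1+t)^{-1}$ once $|x|>t/2$ and $t\ge1$, which is precisely why the region split and the reduction to $t\ge1$ are carried out at the start.
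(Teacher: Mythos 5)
Your proof is correct, but it takes a genuinely different route from the paper's. The paper proves \eqref{d18} by exhibiting three exact pointwise identities,
\begin{equation}\nonumber
tQ_{ij}(f,g)=-\partial_tf\,\Omega_{ij}g+L_if\,\partial_jg-L_jf\,\partial_ig,\quad
tQ_{0j}(f,g)=\partial_tf\,L_jg-L_jf\,\partial_tg,\quad
tQ(f,g)=\partial_tf\,L_0g-\sum_iL_if\,\partial_ig,
\end{equation}
each of which already has the form $\Gamma f\cdot Dg$ or $Df\cdot\Gamma g$ on the right; dividing by $t$ gives \eqref{d18} for $t$ large, and for $t$ small the bound is trivial, with no spatial decomposition and no null frame at all. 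You instead run the standard null-frame argument: split off $t\le1$, use Proposition \ref{propa12} in $\{|x|\le t/2\}$ (where no null structure is needed), and in $\{|x|>t/2\}$ decompose $\partial_\alpha$ into good derivatives $\partial_t+\partial_r,\ \bar\partial_i$ plus a bad multiple of $\partial_t-\partial_r$, observing that the bad$\times$bad contribution cancels by antisymmetry (for $Q_{\alpha\beta}$) or is absent (for $Q$). All your frame identities and weight comparisons check out, and the cancellation argument is exactly what the null condition encodes, so the proof is sound. What each approach buys: the paper's identities are shorter, hold globally in $x$, and produce precisely the combination $|\Gamma f||Dg|+|Df||\Gamma g|$ with first-order vector fields only; your version is longer and needs the region split (because $t+|x|$ and $|x|$ are comparable to $1+t$ only away from the spatial origin), but it isolates structurally which derivative components gain the decay, and it generalizes immediately to any null form given by a symbol vanishing on the light cone rather than relying on case-by-case identities. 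One cosmetic remark: in the interior region, citing Proposition \ref{propa12} as stated brings in the zeroth-order term $|f|$ on the right; the explicit expressions for $\partial_t,\partial_{x_j}$ in its proof show that only genuinely first-order $\Gamma$'s are needed there, so the bound you get matches \eqref{d18} exactly.
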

\begin{proof}
In fact, one can check the identities
\begin{equation}\nonumber
\begin{cases}
Q_{ij}(f, g) = \frac{-\partial_tf\Omega_{ij}g + L_if\partial_jg -
L_jf\partial_ig}{t},\\[-4mm]\\
Q_{0j}(f, g) = \frac{\partial_tfL_jg - L_jf\partial_tg}{t},\\[-4mm]\\
Q(f, g) = \frac{\partial_tfL_0g - \sum_{i =
1}^2L_if\partial_ig}{t},
\end{cases}
\end{equation}
which give \eqref{d18} if $t$ is large. In the case that $t$ is
small, \eqref{d18} is obvious.
\end{proof}

It is easy to check the following commutating property (see
Klainerman \cite{Klainerman}):
\begin{lem}\label{lemd12}
Let $\Gamma$ be any vector field defined in \eqref{c1-1},
$Q_{\alpha\beta}(f, g)$ and $Q(f, g)$ be the nonlinearities with
null structure as in Lemma \ref{lemd11}. Then there holds
\begin{equation}\nonumber
\begin{cases}
[\Gamma, Q_{\alpha\beta}] = \lambda^{\alpha\beta\gamma\delta}Q_{\gamma\delta},\\[-4mm]\\
[\Gamma, Q] = \lambda Q_{\gamma\delta},
\end{cases}
\end{equation}
where $\lambda's$ are constants and $[\Gamma, Q_{\alpha\beta}](f,
g) = \Gamma Q(f, g) - Q(\Gamma f, g) - Q(f, \Gamma g)$.
\end{lem}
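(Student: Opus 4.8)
The plan is to reduce the statement to the elementary first order commutation relations between $\Gamma$ and the coordinate vector fields $\partial_\alpha$, and then apply the Leibniz rule, since every operator in \eqref{c1-1} is a first order differential operator, hence a derivation on products of functions. As a preliminary step I would record, for each generator $\Gamma\in\{\partial_\mu,\Omega_{ij},L_0,L_i\}$, the identity
\[
[\Gamma,\partial_\alpha]=\sum_{0\le\gamma\le n}c_{\Gamma\alpha}^{\gamma}\,\partial_\gamma,\qquad c_{\Gamma\alpha}^{\gamma}\in\mathbb{R},
\]
which is the $|\alpha|=1$ instance of the second relation in Proposition \ref{propa11}; explicitly $[\partial_\mu,\partial_\alpha]=0$, $[\Omega_{ij},\partial_k]=\delta_{jk}\partial_i-\delta_{ik}\partial_j$, $[\Omega_{ij},\partial_t]=0$, $[L_i,\partial_j]=-\delta_{ij}\partial_t$, $[L_i,\partial_t]=-\partial_i$ and $[L_0,\partial_\alpha]=-\partial_\alpha$. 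The structural feature I will exploit is that $\Omega_{ij}$ and $L_i$ are Killing vector fields of the Minkowski metric $\eta$ whereas $L_0$ is a homothety: lowering one index with $\eta$ makes the array $(c_{\Gamma\alpha}^{\gamma})$ antisymmetric in its two indices when $\Gamma=\Omega_{ij}$ or $\Gamma=L_i$, while $c_{L_0\alpha}^{\gamma}=-\delta_{\alpha}^{\gamma}$.

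For the antisymmetric null forms $Q_{\alpha\beta}$ I would expand $\Gamma Q_{\alpha\beta}(f,g)$ by the Leibniz rule and substitute $\Gamma\partial_\mu h=\partial_\mu(\Gamma h)+[\Gamma,\partial_\mu]h$ in every term. The four terms carrying $\partial_\alpha(\Gamma f)$, $\partial_\beta(\Gamma f)$, $\partial_\alpha(\Gamma g)$, $\partial_\beta(\Gamma g)$ reassemble exactly into $Q_{\alpha\beta}(\Gamma f,g)+Q_{\alpha\beta}(f,\Gamma g)$, and collecting the remaining terms by bilinearity, using $Q_{\gamma\delta}=-Q_{\delta\gamma}$, yields
\[
[\Gamma,Q_{\alpha\beta}](f,g)=\sum_{\gamma}\Big(c_{\Gamma\alpha}^{\gamma}\,Q_{\gamma\beta}(f,g)+c_{\Gamma\beta}^{\gamma}\,Q_{\alpha\gamma}(f,g)\Big),
\]
the asserted constant coefficient combination of null forms; this works uniformly for every $\Gamma$ and uses no further structure.

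The wave type null form $Q$ is treated by the same expansion: the $\partial_\alpha(\Gamma f),\partial_\alpha(\Gamma g)$ terms collapse into $Q(\Gamma f,g)+Q(f,\Gamma g)$, while the leftover is a quadratic expression $\sum_{\gamma,\delta}b_{\Gamma}^{\gamma\delta}\,\partial_\gamma f\,\partial_\delta g$ whose coefficient array $b_{\Gamma}^{\gamma\delta}$ is obtained from $(c_{\Gamma\alpha}^{\gamma})$ by raising its lower index with $\eta$ and symmetrizing. By the Killing property this symmetrization vanishes identically for $\Gamma=\Omega_{ij}$ and $\Gamma=L_i$, so $[\Gamma,Q]=0$ there, and for the homothety it equals $-2\eta^{\gamma\delta}$, so $[L_0,Q]=-2Q$; in each case $[\Gamma,Q]$ is a constant coefficient null form, as claimed. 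If one prefers to avoid the Killing bookkeeping, each of the three cases $\Omega_{ij},L_i,L_0$ can be settled by a two-line direct verification using the explicit constants above. I expect no genuine obstacle: the proof is a direct computation, and the only point needing care is the index bookkeeping with $\eta$ in the $Q$ case, where --- unlike the $Q_{\alpha\beta}$ leftovers, which fall into null form shape automatically --- the leftover must be recognized as a multiple of $Q$ (or seen to vanish) via the Killing/homothety structure or a case check. Iterated operators $\Gamma^{\alpha}$ are not needed for the statement as phrased, but would follow from this by induction together with the bilinearity of the null forms.
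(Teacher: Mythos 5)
Your proof is correct. The paper itself offers no proof of this lemma (it simply cites Klainerman), and your argument is the standard one from that reference: commute $\Gamma$ past each $\partial_\mu$ via the first-order relations $[\Gamma,\partial_\alpha]=c_{\Gamma\alpha}^{\gamma}\partial_\gamma$, reassemble the leading terms into $Q_{\alpha\beta}(\Gamma f,g)+Q_{\alpha\beta}(f,\Gamma g)$, and recognize the leftover as a constant-coefficient null form, with the Killing/homothety structure handling the $Q$ case. Two minor remarks: the paper's displayed definition of $Q_{\alpha\beta}$ contains a typo (as written it is identically zero; the intended form $\partial_\alpha f\,\partial_\beta g-\partial_\beta f\,\partial_\alpha g$ is the one you correctly use), and the second identity in the lemma should read $[\Gamma,Q]=\lambda Q$ rather than $\lambda Q_{\gamma\delta}$ --- your computation $[\Omega_{ij},Q]=[L_i,Q]=[\partial_\mu,Q]=0$ and $[L_0,Q]=-2Q$ is the correct statement, and it is what is actually used in the paper's estimates.
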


\section{Proof of Theorem \ref{thm}}

Let us first rewrite the Faddeev model \eqref{Faddeev-1}
under geodesic normal coordinates $(n_1, n_2)$:
\begin{eqnarray}\label{Faddeev-2}
&&\partial_\mu\partial^\mu\begin{pmatrix}n_1\\ n_2
  \end{pmatrix} + \frac{\partial_\mu n_1\partial^\mu n_1
  + \partial_\mu n_2\partial^\mu n_2}{1 - n_1^2 - n_2^2}
  \begin{pmatrix}n_1\\ n_2\end{pmatrix}\\\nonumber
&&\quad -\ \frac{n_2^2\partial_\mu n_1\partial^\mu n_1 +
  n_1^2\partial_\mu n_2\partial^\mu n_2 - 2n_1n_2\partial_\mu
  n_1\partial^\mu n_2}{1 - n_1^2 - n_2^2}
  \begin{pmatrix}n_1\\ n_2\end{pmatrix}\\\nonumber
&&\quad +\ \frac{\partial_\mu\Big(
  \frac{\partial^\mu n_1\partial^\nu n_2 - \partial^\nu
  n_1\partial^\mu n_2}{\sqrt{1 - n_1^2 - n_2^2}}\Big)}
  {\sqrt{1 - n_1^2 - n_2^2}}
  \begin{pmatrix}(1 - n_1^2)\partial_\nu n_2
  + n_1n_2\partial_\nu n_1\\ -  (1 - n_2^2)\partial_\nu n_1
  - n_1n_2\partial_\nu n_2\end{pmatrix} = 0,
\end{eqnarray}
which turns out to be quasi-linear wave equations. The local
existence of classical solutions for quasilinear wave equations is
well-known provided that the initial data belongs to Sobolev space
$H^{s + 2} \times H^{s + 1}$ with $s \geq 1$ (see
\cite{Hughes-Kato-Marsden}). Consequently, our main Theorem
\ref{thm} is just a corollary of the \textit{a priori} estimates
\eqref{E} in Theorem \ref{APrioriE}.

This section and section 4 are devoted to establishing the
\textit{a priori} estimates in Theorem \ref{APrioriE}. Our
strategy is to use the continuity arguement in the time variable
$t$. By \cite{Hughes-Kato-Marsden} and the assumptions on the
initial data in \eqref{p0}, it is obvious that \eqref{E} is true
for sufficiently small time $t$ and some big constant $M$
depending only on the norms of the initial data in Theorem
\ref{thm}. Let us assume that $T > 0$ is the biggest time such
that \eqref{E} is true on $0 \leq t \leq T$. If $T = \infty$, then
we are done. If $T < \infty$, we are going to prove that
\begin{equation}\label{Ea}
\begin{cases}
\sum_{i = 1}^2\big\|\big(\partial^in_1(t, \cdot), \partial^in_2(t,
  \cdot)\big)\big\|_{\Gamma, s, L^2} < M\epsilon(1 + t)^{\delta},\\[-4mm]\\
\big\|\big(n_1(t, \cdot), n_2(t, \cdot)\big)\big\|_{\Gamma, s,
  L^2} < M\epsilon(1 + t)^{\frac{1}{2} + 2\delta},\\[-4mm]\\
\big\|\big(n_1(t, \cdot), n_2(t, \cdot)\big)\big\|_{\Gamma, s - 2,
  L^\infty} < M\epsilon(1 + t)^{- \frac{1}{2}},
\end{cases}
\end{equation}
${\rm for}\ 0 \leq t \leq T$. By \cite{Hughes-Kato-Marsden} again, we
conclude that \eqref{E} is valid at least for $0 \leq t \leq T + \delta_0$
with a sufficiently small $\delta_0 > 0$, and hence we obtain a
contradiction to the maximality of $T$.Thus \eqref{E} is valid for all
time $t \geq 0$.

Consequently, our goal is to prove that \eqref{Ea} is true for $0
\leq t \leq T$ under the assumption that \eqref{E} is true for $0
\leq t \leq T < \infty$. Before doing that, let us prove the
following Lemma concerning the following $L^2$ estimate of the
unknown itself for linear wave equation:
\begin{thm}\label{thmb22}
Assume that $u$ solve the linear wave equation in $\mathbb{R}
\times \mathbb{R}^2$:
\begin{equation}\label{b111-2}
\Box u = f,\quad u(0, x) = u_0(x), u_t(0, x) = u_1(x).
\end{equation}
Then we have
\begin{eqnarray}\label{b24}
&&\|u(t, \cdot)\|_{L^2} \leq \|u_0(\cdot)\|_{L^2} + C(1 +
  t)^{\frac{1}{2}}\Big\{\|u_1(\cdot)\|_{L^{\frac{4}{3}}}\\\nonumber
&&\quad + \int_0^t\Big(\|f(\tau, \cdot)\|_{L^{\frac{4}{3}},
\chi_1} + (1 + \tau)^{-\frac{1}{2}}\|f(\tau, \cdot)\|_{L^{1, 2},
\chi_2}\Big)d\tau\Big\},
\end{eqnarray}
where $\chi_1$ is the characteristic function of $\{x: |x| \leq 1
+ \frac{t}{2}\}$ and $\chi_2 = 1 - \chi_1$.
\end{thm}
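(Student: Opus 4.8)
The plan is to combine Duhamel's formula with the explicit two–dimensional forward fundamental solution of $\Box$, the $L^{1}$–function
\[
E(s)(z)=\frac{1}{2\pi}\bigl(s^{2}-|z|^{2}\bigr)_{+}^{-1/2},\qquad \frac{\sin(s\sqrt{-\Delta})}{\sqrt{-\Delta}}\,g=E(s)*g,
\]
writing $u(t)=\cos(t\sqrt{-\Delta})u_{0}+E(t)*u_{1}+\int_{0}^{t}E(t-\tau)*f(\tau,\cdot)\,d\tau$. Plancherel gives $\|\cos(t\sqrt{-\Delta})u_{0}\|_{L^{2}}\le\|u_{0}\|_{L^{2}}$. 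For the two terms measured by the $L^{4/3}$–norm I would use only Young's inequality together with the elementary fact $\|E(s)\|_{L^{4/3}(\mathbb{R}^{2})}=Cs^{1/2}$ (pass to polar coordinates and substitute $u=s^{2}-r^{2}$): since $\tfrac12=\tfrac34+\tfrac34-1$, this yields $\|E(t)*u_{1}\|_{L^{2}}\le Ct^{1/2}\|u_{1}\|_{L^{4/3}}$ and, after splitting $f=f\chi_{1}+f\chi_{2}$ and applying Minkowski's inequality in $\tau$,
\[
\Bigl\|\int_{0}^{t}E(t-\tau)*\bigl(f(\tau)\chi_{1}\bigr)\,d\tau\Bigr\|_{L^{2}}\le C\int_{0}^{t}(t-\tau)^{1/2}\,\|f(\tau)\|_{L^{4/3},\chi_{1}}\,d\tau\le C(1+t)^{1/2}\int_{0}^{t}\|f(\tau)\|_{L^{4/3},\chi_{1}}\,d\tau .
\]

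The essential point is the exterior contribution $v(t):=\int_{0}^{t}E(t-\tau)*\bigl(f(\tau)\chi_{2}\bigr)\,d\tau$, for which $L^{4/3}$–estimates are useless (the $L^{1,2}$–norm controls no power of the $L^{4/3}$–norm, because of possible concentration in the radial variable). Here I would argue by duality. For $\phi\in L^{2}$ with $\|\phi\|_{L^{2}}=1$ the reality and evenness of $E(s)$ give $\langle v(t),\phi\rangle=\int_{0}^{t}\langle f(\tau)\chi_{2},\,E(t-\tau)*\phi\rangle\,d\tau$; writing $x=r\xi$ and noticing that the $L^{1,2}$–norm carries exactly the weight $r$ coming from $dx=r\,dr\,d\xi$, one gets
\[
|\langle v(t),\phi\rangle|\le\int_{0}^{t}\|f(\tau)\|_{L^{1,2},\chi_{2}}\;\|E(t-\tau)*\phi\|_{L^{\infty,2},\chi_{2}}\,d\tau .
\]
Thus everything reduces to the purely linear estimate: for every $R\ge1$ and $s\ge0$,
\[
\sup_{r>R}\,\|(E(s)*\phi)(r\xi)\|_{L^{2}(S^{1})}\le C\Bigl(\frac{1+s}{R}\Bigr)^{1/2}\|\phi\|_{L^{2}} .
\]
Granting this and using that on the support of $\chi_{2}$ one has $|x|>1+\tfrac{\tau}{2}\ge\tfrac12(1+\tau)$ (recall $\tau\le t$), the corresponding factor becomes $\bigl(\tfrac{1+(t-\tau)}{1+\tau}\bigr)^{1/2}\le C(1+t)^{1/2}(1+\tau)^{-1/2}$, and integrating in $\tau$ gives precisely the asserted bound \eqref{b24}.

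To prove the displayed linear estimate, fix $r\ge R$ and introduce $T_{r}\colon L^{2}(S^{1})\to L^{2}(\mathbb{R}^{2})$, $T_{r}\beta(x)=\int_{S^{1}}E(s)(x-r\sigma)\beta(\sigma)\,d\sigma$, whose adjoint is $(T_{r}^{*}\psi)(\sigma)=(E(s)*\psi)(r\sigma)$; hence $\|(E(s)*\phi)(r\,\cdot)\|_{L^{2}(S^{1})}\le\|T_{r}\|\,\|\phi\|_{L^{2}}$ and it suffices to bound $\|T_{r}\|=\|T_{r}^{*}T_{r}\|^{1/2}$. A Fubini computation (justified first for $\phi$ in a dense class, the final bound being uniform) gives
\[
\|T_{r}\beta\|_{L^{2}(\mathbb{R}^{2})}^{2}=\iint_{S^{1}\times S^{1}}\beta(\sigma)\,\overline{\beta(\sigma')}\;\bigl(E(s)*E(s)\bigr)\bigl(r(\sigma-\sigma')\bigr)\,d\sigma\,d\sigma',
\]
and the scaling identity $E(s)*E(s)=\bigl(E_{1}*E_{1}\bigr)(\,\cdot/s\,)$ shows that $E(s)*E(s)$ is nonnegative, supported in $\{|c|<2s\}$, and — from the explicit form of $E_{1}*E_{1}$, or from $\widehat{E(s)*E(s)}(\xi)=\sin^{2}(s|\xi|)/|\xi|^{2}$ together with finite speed of propagation — obeys $\bigl(E(s)*E(s)\bigr)(c)\le C\bigl(1+\log_{+}\tfrac{s}{|c|}\bigr)\mathbf{1}_{\{|c|<2s\}}$ uniformly in $s$. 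Schur's test on the circle then reduces the estimate to bounding $\sup_{\sigma}\int_{S^{1}}\bigl(1+\log_{+}\tfrac{s}{r|\sigma-\sigma'|}\bigr)\mathbf{1}_{\{r|\sigma-\sigma'|<2s\}}\,d\sigma'$; with $\gamma=|\sigma-\sigma'|$ this is $\le C\int_{0}^{2}\bigl(1+\log_{+}\tfrac{s}{r\gamma}\bigr)\mathbf{1}_{\{r\gamma<2s\}}\,d\gamma$, which a short computation shows is $\le C\,s/r$ when $s\le r$ and $\le C\bigl(1+\log(s/r)\bigr)$ when $s>r$ — in either case $\le C(1+s)/R$ for $r\ge R\ge1$, using $e^{x}\ge1+x$. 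Hence $\|T_{r}\|\le C\bigl((1+s)/R\bigr)^{1/2}$ for all $r\ge R$, which is the required estimate.

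I expect this last $TT^{*}$ bound for the ``restricted circular means'' operator $T_{r}$ — in particular the uniform logarithmic control of $E(s)*E(s)$ and the Schur integral on $S^{1}$ with the correct dependence on $r$ and $s$ — to be the only genuine difficulty; the splitting into homogeneous, interior and exterior parts and the $L^{4/3}$–estimates are routine once $E(s)$ is written out explicitly, and Lemma \ref{lema32}, Theorem \ref{thmb12} and Theorem \ref{thmb15} are not needed for \eqref{b24} itself.
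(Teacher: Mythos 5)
Your proposal is correct, and for the decisive step it takes a genuinely different route from the paper. For $\cos(t\sqrt{-\Delta})u_0$ both arguments are identical (Plancherel), and for the $L^{4/3}$ terms you use Young's inequality with $\|E(s)\|_{L^{4/3}(\mathbb{R}^2)}=Cs^{1/2}$, whereas the paper stays on the Fourier side, rescaling $\xi\mapsto\eta/t$ and invoking the Sobolev embedding $L^{4/3}(\mathbb{R}^2)\hookrightarrow H^{-1}(\mathbb{R}^2)$; these are two equally elementary proofs of the same multiplier bound. The real divergence is in the exterior term. The paper bounds $\big\|\frac{\sin(|\xi|(t-\tau))}{|\xi|}\widehat{\chi_2 f}\big\|_{L^2}$ by an $H^{-1}$ norm, dualizes against $v\in H^1$, applies the radial--angular H\"older pairing $\int gv\le\|g\|_{L^{1,2}}\|v\|_{L^{\infty,2}}$, and finishes with the one-line trace inequality $\sup_{r\ge\rho}\|v(r\xi)\|_{L^2(S^1)}^2\le C\rho^{-1}\|v\|_{L^2}\|\nabla v\|_{L^2}$ (the same Hardy-type computation that reappears in their Lemma 4.1). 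You instead dualize against $\phi\in L^2$ and prove the propagator bound $\sup_{r>R}\|(E(s)*\phi)(r\xi)\|_{L^2(S^1)}\le C((1+s)/R)^{1/2}\|\phi\|_{L^2}$ by a $TT^*$ argument and Schur's test on the circle. This works: your asserted kernel bound is in fact exact, since by descent from the convolution of two unit-sphere measures in $\mathbb{R}^3$ one computes $(E_1*E_1)(w)=\frac{1}{4\pi}\log\frac{2+\sqrt{4-|w|^2}}{|w|}$ for $|w|<2$, which gives precisely $C(1+\log_+\frac{1}{|w|})$, and your Schur integral and the final reduction ($R=1+\tfrac{\tau}{2}\ge\tfrac12(1+\tau)$, $s=t-\tau$) are right. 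What you should be aware of is that this $TT^*$ machinery is avoidable: your key linear estimate follows in two lines from energy conservation $\|\nabla(E(s)*\phi)\|_{L^2}\le\|\phi\|_{L^2}$, the crude bound $\|E(s)*\phi\|_{L^2}\le s\|\phi\|_{L^2}$, and the same trace inequality the paper uses. What your route buys is a completely explicit, physical-space proof that never mentions $H^1$ duality or Hardy's inequality; what it costs is that the only nontrivial ingredient ($E_1*E_1$ and the Schur sum) is left as an asserted computation rather than proved, though the assertion is true.
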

\begin{proof}
To prove \eqref{b24}, we compute that
\begin{eqnarray}\nonumber
&&\|u(t, \cdot)\|_{L^2} \leq \|u_0(\cdot)\|_{L^2} +
  \Big\|\frac{\sin(|\xi|t)}{|\xi|}\widehat{u_1}(\xi)\Big\|_{L^2}\\\nonumber
&&\quad +\ \int_0^t\Big\|\frac{\sin\big(|\xi|(t - \tau)
  \big)}{|\xi|}\widehat{\chi_1f}(\tau, \xi)\Big\|d\tau
  + \int_0^t\Big\|\frac{\sin\big(|\xi|(t - \tau)
  \big)}{|\xi|}\widehat{\chi_2f}(\tau, \xi)\Big\|d\tau.
\end{eqnarray}
Next we do the following straightforward computation
\begin{eqnarray}\nonumber
&&\Big\|\frac{\sin(|\xi|t)}{|\xi|}\widehat{u_1}(\xi)\Big\|_{L^2}
  = \Big\|\frac{\sin(|\eta|)}{|\eta|}\widehat{u_1}
  \big(\frac{\eta}{t}\big)\Big\|_{L^2}\\\nonumber
&&\leq C\Big\|(1 + |\eta|)^{-1}
  \widehat{u_1}\big(\frac{\eta}{t}\big)\Big\|_{L^2}
  =  C t^2\Big\|(1 + |\eta|)^{-1}
  \widehat{u_1(tx)}(\eta)\Big\|_{L^2}\\\nonumber
&&\leq Ct^2\|u_1(tx)\|_{H^{-1}} \leq Ct^2
  \|u_1(tx)\|_{L^{\frac{4}{3}}}
  = Ct^{\frac{1}{2}}\|u_1\|_{L^{\frac{4}{3}}}.
\end{eqnarray}
A similar estimate also holds for
$\int_0^t\Big\|\frac{\sin\big(|\xi|(t - \tau)
\big)}{|\xi|}\widehat{\chi_1f}(\tau, \xi)\Big\|d\tau$.

Finally we compute
\begin{eqnarray}\nonumber
&&\Big\|\frac{\sin\big(|\xi|(t - \tau)
  \big)}{|\xi|}\widehat{\chi_2f}(\tau, \xi)\Big\|\\\nonumber
&&\leq C(t - \tau)^2\|\chi_2f\big(\tau, (t - \tau)x
  \big)\|_{H^{-1}}\\\nonumber
&&= C(t - \tau)^2\sup_{v\in H^1}\frac{\int\chi_2f\big(\tau, (t -
  \tau)x\big)v(x)dx}{\|v\|_{H^1}}\\\nonumber
&&\leq C(t - \tau)^2\sup_{v\in H^1}\frac{\big\|\chi_2f\big(\tau,
  (t - \tau)x\big)\big\|_{L^{1, 2}}\|v\|_{L^{\infty, 2}, |(t - \tau)y|
  \geq \frac{1 + \tau}{2}}}{\|v\|_{H^1}}\\\nonumber
&&\leq C(t - \tau)^2\big\|\chi_2f\big(\tau,
  (t - \tau)x\big)\big\|_{L^{1, 2}}\sup_{v\in H^1}\frac{\Big(\sup_{r
  \geq \frac{1 + \tau}{2(t - \tau)}}\int |v(r\xi)|^2d\xi\Big)^{\frac{1}{2}}}
  {\|v\|_{H^1}}\\\nonumber
&&\leq C\|f(\tau, x)\|_{L^{1, 2}, \chi_2}\sup_{v\in H^1}
  \frac{\Big(\sup_{r \geq \frac{1 + \tau}{2(t - \tau)}}-\int_r^\infty\partial_r\int
  |v(r\xi)|^2d\xi dr\Big)^{\frac{1}{2}}}{\|v\|_{H^1}}\\\nonumber
&&\leq C\|f(\tau, x)\|_{L^{1, 2}, \chi_2}\sqrt{\frac{t - \tau}{1 +
  \tau}} \leq C\sqrt{\frac{1 + t}{1 +
  \tau}}\|f(\tau, x)\|_{L^{1, 2}, \chi_2}.
\end{eqnarray}
The proof of Theorem \ref{thmb22} is thus completed.
\end{proof}

\begin{rem}\label{rem1}
It is easy to see that the following estimate
\begin{eqnarray}\label{b24-1}
\|u(t, \cdot)\|_{L^2} \leq \|u_0(\cdot)\|_{L^2} + C(1 +
t)^{\frac{1}{2}}\Big\{\|u_1(\cdot)\|_{L^{\frac{4}{3}}} +
\int_0^t\|f(\tau, \cdot)\|_{L^{\frac{4}{3}}}d\tau\Big\}
\end{eqnarray}
is also true by using the same proof as that for Theorem \ref{thmb22}.
In fact, one deduces \eqref{b24-1} in the case that $f(t, x)$
decays sufficiently fast outside the light cone $\{(t, x):
|x| \leq 1 + \frac{t}{2}\}$.
\end{rem}

Now let us move on to show \eqref{Ea} for $0 \leq t \leq T$
under the assumption that \eqref{E} is true for $0 \leq t \leq T <
\infty$. We shall first prove the second and third \textit{a
priori} estimates in \eqref{Ea}, while we will prove the first inequality
of \eqref{Ea} in section 4.

First of all, noting that $[\frac{s}{2}] + 5 \leq s$ for $s \geq
9$, one can easily deduce from Sobolev inequality and the third
inequality in \eqref{E} that
\begin{eqnarray}\label{p1-1}
&&\sum_{i = 0}^2\big\|\big(\partial^in_1(t, \cdot),
  \partial^in_2(t, \cdot)\big)\big|_{\Gamma,
  [\frac{s}{2}] + 1, L^\infty}\\\nonumber
&&\leq C\big\|\big(n_1(t, \cdot), n_2(t, \cdot)\big)\big\|_{
  \Gamma, [\frac{s}{2}] + 3, L^\infty}\leq C\big\|\big(n_1(t, \cdot), n_2(t, \cdot)
  \big)\big\|_{\Gamma,  s - 2, L^\infty}\\\nonumber
&&\leq CM\epsilon(1 + t)^{- \frac{1}{2}}.
\end{eqnarray}
Inequality \eqref{p1-1} will be used repeatly in the rest of this
section and in section 4.

\bigskip

\noindent \textbf{Estimates for $\big\|\big(n_1(t, \cdot), n_2(t,
\cdot)\big)\big\|_{\Gamma, s - 2, L^\infty}$ in \eqref{Ea}}

\bigskip

Let
\begin{eqnarray}\label{p1-3}
&&f = \begin{pmatrix}f_1\\ f_2
  \end{pmatrix} = - \frac{\partial_\mu n_1\partial^\mu n_1
  + \partial_\mu n_2\partial^\mu n_2}{1 - n_1^2 - n_2^2}
  \begin{pmatrix}n_1\\ n_2\end{pmatrix}\\\nonumber
&&\quad +\ \frac{n_2^2\partial_\mu n_1\partial^\mu n_1 +
  n_1^2\partial_\mu n_2\partial^\mu n_2 - 2n_1n_2\partial_\mu
  n_1\partial^\mu n_2}{1 - n_1^2 - n_2^2}
  \begin{pmatrix}n_1\\ n_2\end{pmatrix}\\\nonumber
&&\quad -\ \frac{\partial_\mu\Big(
  \frac{\partial^\mu n_1\partial^\nu n_2 - \partial^\nu
  n_1\partial^\mu n_2}{\sqrt{1 - n_1^2 - n_2^2}}\Big)}
  {\sqrt{1 - n_1^2 - n_2^2}}
  \begin{pmatrix}(1 - n_1^2)\partial_\nu n_2
  + n_1n_2\partial_\nu n_1\\ -  (1 - n_2^2)\partial_\nu n_1
  - n_1n_2\partial_\nu n_2\end{pmatrix}.
\end{eqnarray}
By Proposition \ref{propa11}, one has
\begin{equation}\label{p1-4}
\begin{cases}
\Box\Gamma^\alpha n_1 = \sum_{\beta \leq \alpha}
  C_{\beta}\Gamma^\beta f_1,\\[-4mm]\\
\Box\Gamma^\alpha n_2 = \sum_{\beta \leq \alpha}
  C_{\beta}\Gamma^\beta f_2.
\end{cases}
\end{equation}
Consequently, by Theorem \ref{thmb12} and Theorem \ref{thmb15}, we
have
\begin{eqnarray}\label{p2-1}
&&\|n_1(t, \cdot)\|_{\Gamma, s - 2, L^\infty} \leq
  C\sum_{|\alpha| \leq s - 2}\|\Gamma^\alpha n_1(t,
  \cdot)\|_{L^\infty}\\\nonumber
&&\leq C(1 + t)^{-\frac{1}{2}}\sum_{|\alpha| \leq s - 2}
  \Big\{\|\Gamma^\alpha n_1(0, \cdot)\|_{W^{2, 1}} +
  \|\partial_t\Gamma^\alpha n_1(0, \cdot)\|_{W^{1,
  1}}\\\nonumber
&&\quad +\ \int_0^t\big[\|\Gamma^\alpha f_1(\tau, \cdot)
  \|_{W^{1, 1}}(1 +
  \tau)^{-\frac{1}{2}}\big]d\tau\Big\}\\\nonumber
&&\leq C(1 + t)^{-\frac{1}{2}}\Big\{\|(n_{10}, n_{20})\|_{H^{s +
  2}} + \|(n_{11}, n_{21})\|_{H^{s + 1}}\\\nonumber
&&\quad +\ \int_0^t\big[\|f_1(\tau, \cdot)\|_{\Gamma, s - 1,
  L^1}(1 + \tau)^{-\frac{1}{2}}\big]d\tau\Big\}.
\end{eqnarray}
Here we point out that one can use equations \eqref{p1-3} to
express $\Gamma^\alpha n_1$, $\partial_t(\Gamma^\alpha
n_1)$, $\Gamma^\alpha n_2$ and $\partial_t(\Gamma^\alpha n_2)$ at
time $t = 0$ in terms of the spacial derivatives of $n_{10}$,
$n_{20}$, $n_{11}$ and $n_{21}$. As a consequence, one has that
\begin{eqnarray}\nonumber
&&\sum_{|\alpha| \leq s - 2}\Big\{\|\Gamma^\alpha n_1(0,
  \cdot)\|_{W^{2, 1}} + \|\partial_t\Gamma^\alpha n_1(0,
  \cdot)\|_{W^{1, 1}}\Big\}\\\nonumber
&&\leq C\big(\|(n_{10}, n_{20})\|_{H^{s + 2}} + \|(n_{11},
  n_{21})\|_{H^{s + 1}}\big).
\end{eqnarray}
Repeating the above argument, one also has
\begin{eqnarray}\label{p2-2}
&&\|n_2(t, \cdot)\|_{\Gamma, s - 2, L^\infty} \leq
  C(1 + t)^{-\frac{1}{2}}\Big\{\|(n_{10}, n_{20})\|_{H^{s +
  2}} + \|(n_{11}, n_{21})\|_{H^{s + 1}}\\\nonumber
&&\quad +\ \int_0^t\big[\|f_2(\tau, \cdot)\|_{\Gamma, s - 1,
  L^1}(1 + \tau)^{-\frac{1}{2}}\big]d\tau\Big\}.
\end{eqnarray}
To proceed further, we need estimate $\|f_1(\tau, \cdot)\|_{\Gamma, s - 1,
L^1}$
and $\|f_2(\tau, \cdot)\|_{\Gamma, s - 1, L^1}$.

First of all, H${\rm \ddot{o}}$lder inequality gives
\begin{eqnarray}\label{p3-1}
&&\Big\|\frac{n_1\big(\partial_\mu n_1\partial^\mu n_1
  + \partial_\mu n_2\partial^\mu n_2\big)}{1 - n_1^2 - n_2^2}
  \Big\|_{\Gamma, s - 1, L^1}\\\nonumber
&&\leq C\Big\|\frac{n_1}{1 - n_1^2 - n_2^2}\Big\|_{\Gamma, s
  - 1, L^2}\big\|\partial_\mu n_1\partial^\mu n_1 + \partial_\mu
  n_2\partial^\mu n_2\big\|_{\Gamma, [\frac{s - 1}{2}], L^2}\\\nonumber
&&\quad  +\ C\Big\|\frac{n_1}{1 - n_1^2 - n_2^2}\Big\|_{\Gamma,
  [\frac{s - 1}{2}], L^\infty}\big\|\partial_\mu n_1\partial^\mu n_1
  + \partial_\mu n_2\partial^\mu n_2\big\|_{\Gamma, s - 1,
  L^1}.
\end{eqnarray}
By \eqref{E} and Lemma \ref{lemc12}, we have
\begin{eqnarray}\nonumber
\Big\|\frac{n_1}{1 - n_1^2 - n_2^2}\Big\|_{\Gamma, s
  - 1, L^2} \leq C\big\|(n_1, n_2)\big\|_{\Gamma, s -
1, L^2} \leq CM\epsilon(1 + \tau)^{\frac{1}{2} + 2\delta}.
\end{eqnarray}
By Lemma \ref{lemd11} and Lemma \ref{lemd12}, we estimate
\begin{eqnarray}\nonumber
&&\big\|\partial_\mu n_1\partial^\mu n_1 + \partial_\mu
  n_2\partial^\mu n_2\big\|_{\Gamma, [\frac{s - 1}{2}],
  L^2}\\\nonumber
&&\leq C\sum_{|\alpha + \beta| \leq [\frac{s -
  1}{2}]}\big\|\partial_\mu\Gamma^\alpha n_1\partial^\mu
  \Gamma^\beta n_1 + \partial_\mu\Gamma^\alpha n_2\partial^\mu
  \Gamma^\beta n_2\big\|_{L^2}\\\nonumber
&&\leq C(1 + \tau)^{-1}\sum_{|\alpha + \beta| \leq [\frac{s -
  1}{2}]}\big\|\partial\Gamma^\alpha n_1\Gamma
  \Gamma^\beta n_1 + \partial\Gamma^\alpha n_2\Gamma
  \Gamma^\beta n_2\big\|_{L^2}\\\nonumber
&&\leq C(1 + \tau)^{-1}\|(\partial n_1, \partial n_2)\|_{\Gamma, s
  - 3, L^2}\|(n_1, n_2)\|_{\Gamma, s - 2, L^\infty}\\\nonumber
&&\leq C(M\epsilon)^2(1 + \tau)^{- \frac{3}{2} + \delta}.
\end{eqnarray}
Consequently, the above two estimates yield
\begin{eqnarray}\label{p3-2}
\Big\|\frac{n_1}{1 - n_1^2 - n_2^2}\Big\|_{\Gamma, s
  - 1, L^2}\big\|\partial_\mu n_1\partial^\mu n_1 + \partial_\mu
  n_2\partial^\mu n_2\big\|_{\Gamma, [\frac{s - 1}{2}], L^2}
\leq C(M\epsilon)^3(1 + \tau)^{- 1 + 3\delta}.
\end{eqnarray}
 Similarly, using \eqref{E} and Lemma \ref{lemc12} onc more time, we can
deduce that
\begin{eqnarray}\nonumber
\Big\|\frac{n_1}{1 - n_1^2 - n_2^2}\Big\|_{\Gamma,
  [\frac{s - 1}{2}], L^\infty} \leq
CM\epsilon(1 + \tau)^{- \frac{1}{2}}.
\end{eqnarray}
By \eqref{E}, Lemma \ref{lemd11} and Lemma \ref{lemd12}, one thus has
\begin{eqnarray}\nonumber
&&\big\|\partial_\mu n_1\partial^\mu n_1
  + \partial_\mu n_2\partial^\mu n_2\big\|_{\Gamma, s - 1,
  L^1}\\\nonumber
&&\leq C(1 + \tau)^{-1}\|(\partial n_1, \partial n_2)\|_{\Gamma, s
  - 1, L^2}\|(n_1, n_2)\|_{\Gamma, s, L^2}\\\nonumber
&&\leq C(M\epsilon)^2(1 + \tau)^{- \frac{1}{2} + 3\delta}.
\end{eqnarray}
Consequently, one obtains that
\begin{eqnarray}\label{p3-3}
&&\Big\|\frac{n_1}{1 - n_1^2 - n_2^2}\Big\|_{\Gamma,
  [\frac{s - 1}{2}], L^\infty}\big\|\partial_\mu n_1\partial^\mu n_1
  + \partial_\mu n_2\partial^\mu n_2\big\|_{\Gamma, s - 1,
  L^1}\\\nonumber
&&\leq C(M\epsilon)^3(1 + \tau)^{- 1 + 3\delta}.
\end{eqnarray}
Combining \eqref{p3-2}, \eqref{p3-3} with \eqref{p3-1}, we thus conclude
\begin{equation}\label{p3-4}
\Big\|\frac{n_1\big(\partial_\mu n_1\partial^\mu n_1
  + \partial_\mu n_2\partial^\mu n_2\big)}{1 - n_1^2 - n_2^2}
  \Big\|_{\Gamma, s - 1, L^1} \leq C(M\epsilon)^3(1 + \tau)^{- 1 + 3\delta}.
\end{equation}

A similar argument gives also that
\begin{eqnarray}\label{p3-5}
&&\Big\|\frac{n_1\big(n_2^2\partial_\mu n_1\partial^\mu n_1 +
  n_1^2\partial_\mu n_2\partial^\mu n_2 - 2n_1n_2\partial_\mu
  n_1\partial^\mu n_2\big)}{1 - n_1^2 - n_2^2}
  \Big\|_{\Gamma, s - 1, L^1}\\\nonumber
&&\leq C(M\epsilon)^5(1 + \tau)^{- 2 + 3\delta}.
\end{eqnarray}
To finish the estimate for $\|f_1(\tau, \cdot)\|_{\Gamma, s - 1,
L^1}$, it remains to bound
\begin{eqnarray}\nonumber
\Big\|\frac{(1 - n_1^2)\partial_\nu n_2 + n_1n_2\partial_\nu n_1}
{\sqrt{1 - n_1^2 - n_2^2}}\partial_\mu\Big(\frac{\partial^\mu
n_1\partial^\nu n_2 - \partial^\nu n_1\partial^\mu n_2}{\sqrt{1 -
n_1^2 - n_2^2}}\Big)\Big\|_{\Gamma, s - 1, L^1}.
\end{eqnarray}
Using H${\rm \ddot{o}}$lder inequality, we can estimate the above
quantity as follows:
\begin{eqnarray}\nonumber
&&\sum_{|\alpha + \beta| \leq s - 1, |\alpha| \leq
  |\beta|}\Big\|\Gamma^\alpha\Big(\frac{1 - n_1^2}{\sqrt{1
  - n_1^2 - n_2^2}}\Big)\Gamma^\beta\Big\{\partial_\nu n_2
  \partial_\mu\Big(\frac{\partial^\mu n_1\partial^\nu n_2
  - \partial^\nu n_1\partial^\mu n_2}{\sqrt{1 - n_1^2
  - n_2^2}}\Big)\Big\}\Big\|_{L^1}\\\nonumber
&&+\ \sum_{|\alpha + \beta| \leq s - 1, |\alpha| \leq
  |\beta|}\Big\|\Gamma^\alpha\Big(\frac{n_1n_2}{\sqrt{1 -
  n_1^2 - n_2^2}}\Big)\Gamma^\beta\Big\{\partial_\nu n_1\partial_\mu
  \Big(\frac{\partial^\mu n_1\partial^\nu n_2 -
  \partial^\nu n_1\partial^\mu n_2}{\sqrt{1 - n_1^2 -
  n_2^2}}\Big)\Big\}\Big\|_{L^1}\\\nonumber
&&+\ C\sum_{|\alpha + \beta| \leq s - 1, |\alpha| > |\beta|}
  \Big\|\Gamma^\alpha\Big(\frac{1 - n_1^2}{\sqrt{1 -
  n_1^2 - n_2^2}}\Big)\Gamma^\beta\Big\{\partial_\nu n_2
  \partial_\mu\Big(\frac{\partial^\mu n_1\partial^\nu n_2
  - \partial^\nu n_1\partial^\mu n_2}{\sqrt{1 - n_1^2
  - n_2^2}}\Big)\Big\}\Big\|_{L^1}\\\nonumber
&&+\ C\sum_{|\alpha + \beta| \leq s - 1, |\alpha| > |\beta|}
  \Big\|\Gamma^\alpha\Big(\frac{n_1n_2}{\sqrt{1 -
  n_1^2 - n_2^2}}\Big)\Gamma^\beta\Big\{\partial_\nu n_1
  \partial_\mu\Big(\frac{\partial^\mu n_1\partial^\nu n_2
  - \partial^\nu n_1\partial^\mu n_2}{\sqrt{1 - n_1^2
  - n_2^2}}\Big)\Big\}\Big\|_{L^1}\\\nonumber
&&\leq C\sum_{i = 1}^2\Big\|\partial_\nu n_i\partial_\mu
  \Big(\frac{\partial^\mu n_1\partial^\nu n_2 - \partial^\nu
  n_1\partial^\mu n_2}{\sqrt{1 - n_1^2 - n_2^2}}\Big)
  \Big\|_{\Gamma, s - 1, L^1}\\\nonumber
&&+\ C\Big\{\Big\|\Gamma\Big(\frac{1 - n_1^2}{\sqrt{1 -
  n_1^2 - n_2^2}}\Big)\Big\|_{\Gamma, s - 2,
  L^2} + \Big\|\Gamma\Big(\frac{n_1n_2}{\sqrt{1 -
  n_1^2 - n_2^2}}\Big)\Big\|_{\Gamma, s - 2,
  L^2}\Big\}\\\nonumber
&&\times\sum_{i = 1}^2\Big\|\partial_\nu n_i\partial_\mu
  \Big(\frac{\partial^\mu n_1\partial^\nu n_2 - \partial^\nu
  n_1\partial^\mu n_2}{\sqrt{1 - n_1^2 - n_2^2}}\Big)
  \Big\|_{\Gamma, [\frac{s - 1}{2}], L^2}.
\end{eqnarray}
By \eqref{E} and Lemma \ref{lemc12}, we hence conclude
\begin{eqnarray}\nonumber
&&\Big\{\Big\|\Gamma\Big(\frac{1 - n_1^2}{\sqrt{1 -
  n_1^2 - n_2^2}}\Big)\Big\|_{\Gamma, s - 2,
  L^2} + \Big\|\Gamma\Big(\frac{n_1n_2}{\sqrt{1 -
  n_1^2 - n_2^2}}\Big)\Big\|_{\Gamma, s - 2,
  L^2}\Big\}\\\nonumber
&&\quad \times\sum_{i = 1}^2\Big\|\partial_\nu n_i\partial_\mu
  \Big(\frac{\partial^\mu n_1\partial^\nu n_2 - \partial^\nu
  n_1\partial^\mu n_2}{\sqrt{1 - n_1^2 - n_2^2}}\Big)
  \Big\|_{\Gamma, [\frac{s - 1}{2}], L^2}\\\nonumber
&&\leq C\|(n_1, n_2)\|_{\Gamma, [\frac{s - 1}{2}],
  L^\infty}\|(n_1, n_2)\|_{\Gamma, s - 1, L^2}\\\nonumber
&&\quad\times \|(\partial n_1, \partial n_2)\|_{\Gamma, s - 2,
  L^\infty}^2\|(\partial n_1, \partial n_2)\|_{\Gamma, s, L^2}\\\nonumber
&&\leq C(M\epsilon)^5(1 + \tau)^{- 1 + 3\delta}.
\end{eqnarray}
On the other hand, using Lemma \ref{lemd11} and Lemma
\ref{lemd12}, one has
\begin{eqnarray}\nonumber
&&\sum_{i = 1}^2\Big\|\partial_\nu n_i\partial_\mu
  \Big(\frac{\partial^\mu n_1\partial^\nu n_2 - \partial^\nu
  n_1\partial^\mu n_2}{\sqrt{1 - n_1^2 - n_2^2}}\Big)
  \Big\|_{\Gamma, s - 1, L^1}\\\nonumber
&&= \sum_{i = 1, 2, |\alpha + \beta| \leq s - 1, |\alpha| \geq
  |\beta|}\Big\|\Gamma^\alpha\partial_\nu n_i\Gamma^\beta\partial_\mu
  \Big(\frac{\partial^\mu n_1\partial^\nu n_2 - \partial^\nu
  n_1\partial^\mu n_2}{\sqrt{1 - n_1^2 - n_2^2}}\Big)
  \Big\|_{L^1}\\\nonumber
&&\quad +\ \sum_{i = 1, 2, |\alpha + \beta| \leq s - 1, |\alpha| <
  |\beta|}\Big\|\Gamma^\alpha\partial_\nu n_i\Gamma^\beta\partial_\mu
  \Big(\frac{\partial^\mu n_1\partial^\nu n_2 - \partial^\nu
  n_1\partial^\mu n_2}{\sqrt{1 - n_1^2 - n_2^2}}\Big)
  \Big\|_{L^1}\\\nonumber
&&\leq C(1 + \tau)^{-1}\Big\{\|(\partial n_1, \partial n_2)
  \|_{\Gamma, s - 1, L^2}\Big(\|(n_1, n_2)\|_{\Gamma, [\frac{s - 1}{2}] + 2, L^\infty}
  \|(\partial n_1, \partial n_2)\|_{\Gamma, [\frac{s - 1}{2}] + 1, L^2}\Big)\\\nonumber
&&\quad +\ \|(\partial n_1, \partial n_2)\|_{\Gamma, [\frac{s -
  1}{2}], L^\infty}\Big(\|(n_1, n_2)\|_{\Gamma, s, L^2}
  \|(\partial n_1, \partial n_2)\|_{\Gamma,
  s, L^2}\Big)\Big\}\\\nonumber
&&\leq C(M\epsilon)^3(1 + \tau)^{- 1 + 3\delta}.
\end{eqnarray}
Thus we obtain
\begin{eqnarray}\label{p3-6}
&&\Big\|\frac{n_1\big(n_2^2\partial_\mu n_1\partial^\mu n_1 +
  n_1^2\partial_\mu n_2\partial^\mu n_2 - 2n_1n_2\partial_\mu
  n_1\partial^\mu n_2\big)}{1 - n_1^2 - n_2^2}
  \Big\|_{\Gamma, s - 1, L^1}\\\nonumber
&&\leq C(M\epsilon)^3[1 + (M\epsilon)^2](1 + \tau)^{- 1 +
  3\delta}.
\end{eqnarray}

Combining \eqref{p3-4}, \eqref{p3-5} and \eqref{p3-6}, we arrive
at
\begin{eqnarray}\label{p3-7}
\|f_1(\tau, \cdot)\|_{\Gamma, s - 1, L^1} \leq C(M\epsilon)^3[1 +
(M\epsilon)^2](1 + \tau)^{- 1 + 3\delta}.
\end{eqnarray}
Inserting \eqref{p3-7} into \eqref{p2-1}, one gets
\begin{eqnarray}\nonumber
&&\|n_1(t, \cdot)\big\|_{\Gamma, s -
  2, L^\infty} \leq C_\star[1 + (M\epsilon)^2]
  (1 + t)^{-\frac{1}{2}}\\\nonumber
&&\quad \times \Big\{\|(n_{10}, n_{20})\|_{H^{s + 2}} +
  \|(n_{11}, n_{21})\|_{H^{s + 1}} + (M\epsilon)^3\Big\}
\end{eqnarray}
for some absolute positive constant $C_\star$. Repeating the above
analysis, one can thus prove
\begin{eqnarray}\nonumber
&&\|n_2(t, \cdot)\|_{\Gamma, s - 2, L^\infty}\leq C_\star[1 +
  (M\epsilon)^2](1 + t)^{-\frac{1}{2}}\\\nonumber
&&\quad \times \Big\{\|(n_{10}, n_{20})\|_{H^{s +
  2}} + \|(n_{11}, n_{21})\|_{H^{s + 1}} + (M\epsilon)^3\Big\}.
\end{eqnarray}
One concludes that the third line in \eqref{Ea} is true provided
that
\begin{eqnarray}\label{p3-8}
\epsilon \leq \frac{1}{2M\sqrt{C_\star}},\quad \|(n_{10},
n_{20})\|_{H^{s + 2}} + \|(n_{11}, n_{21})\|_{H^{s + 1}} \leq
\frac{M\epsilon}{4C_\star}.
\end{eqnarray}

\bigskip

\noindent \textbf{Estimates for $\big\|\big(n_1(t, \cdot), n_1(t,
\cdot)\big)\big\|_{\Gamma, s, L^2}$ in \eqref{Ea}}

\bigskip

By Theorem \ref{thmb22} and using the similar proof as that for
\eqref{p2-1}, one has
\begin{eqnarray}\label{p5-1}
&&\|n_1(t, \cdot)\|_{\Gamma, s, L^2} \leq C(1 + t)^{\frac{1}{2}}
  \big(\|(n_{10}, n_{20})\|_{H^{s +
  2}} + \|(n_{11}, n_{21})\|_{H^{s + 1}}\big)\\\nonumber
&&\quad +\ C(1 + t)^{\frac{1}{2}}\int_0^t\Big(\|f_1(\tau,
  \cdot)\|_{\Gamma, s, L^{\frac{4}{3}}, \chi_1} + (1 + \tau)^{-\frac{1}{2}}
  \|f_1(\tau, \cdot)\|_{\Gamma, s, L^{1, 2}, \chi_2}\Big)d\tau
\end{eqnarray}
and
\begin{eqnarray}\label{p5-2}
&&\|n_2(t, \cdot)\|_{\Gamma, s, L^2} \leq C(1 + t)^{\frac{1}{2}}
  \big(\|(n_{10}, n_{20})\|_{H^{s +
  2}} + \|(n_{11}, n_{21})\|_{H^{s + 1}}\big)\\\nonumber
&&\quad +\ C(1 + t)^{\frac{1}{2}}\int_0^t\Big(\|f_2(\tau,
  \cdot)\|_{\Gamma, s, L^{\frac{4}{3}}, \chi_1} + (1 + \tau)^{-\frac{1}{2}}
  \|f_2(\tau, \cdot)\|_{\Gamma, s, L^{1, 2}, \chi_2}\Big)d\tau,
\end{eqnarray}
where $f_1$ and $f_2$ are given in \eqref{p1-3}. Hence we need to
estimate $\|f_j(\tau, \cdot)\|_{\Gamma, s, L^{\frac{4}{3}},
\chi_1}$ and $\|f_j(\tau, \cdot)\|_{\Gamma, s, L^{1, 2}, \chi_2}$
for $j = 1$, 2.

They can be done as follows:
\begin{eqnarray}\nonumber
&&\|f_1(\tau, \cdot)\|_{\Gamma, s, L^{\frac{4}{3}},
  \chi_1}\\\nonumber
&&\leq  C\Big\|\frac{n_1}{1 - n_1^2 - n_2^2}
  \Big\|_{\Gamma, [\frac{s}{2}], L^\infty}\Big(\big\|
  \partial_\mu n_1\partial^\mu n_1 + \partial_\mu n_2\partial^\mu
  n_2\big\|_{\Gamma, s, L^{\frac{4}{3}}, \chi_1}\\\nonumber
&&+\ \big\|n_2^2\partial_\mu n_1\partial^\mu n_1 +
  n_1^2\partial_\mu n_2\partial^\mu n_2 - 2n_1n_2\partial_\mu
  n_1\partial^\mu n_2\big\|_{\Gamma, s,
  L^{\frac{4}{3}}, \chi_1}\Big)\\\nonumber
&&+\ C\Big\|\frac{n_1}{1 - n_1^2 - n_2^2}
  \Big\|_{\Gamma, s, L^2}\Big(\big\|
  \partial_\mu n_1\partial^\mu n_1 + \partial_\mu n_2\partial^\mu
  n_2\big\|_{\Gamma, [\frac{s}{2}], L^4, \chi_1}\\\nonumber
&&+\ \big\|n_2^2\partial_\mu n_1\partial^\mu n_1 +
  n_1^2\partial_\mu n_2\partial^\mu n_2 - 2n_1n_2\partial_\mu
  n_1\partial^\mu n_2\big\|_{\Gamma, [\frac{s}{2}], L^4, \chi_1}\Big)\\\nonumber
&&+\ C\sum_{i = 1}^2\Big\|\partial_\nu n_i\partial_\mu
  \Big(\frac{\partial^\mu n_1\partial^\nu n_2 - \partial^\nu
  n_1\partial^\mu n_2}{\sqrt{1 - n_1^2 - n_2^2}}\Big)
  \Big\|_{\Gamma, s, L^{\frac{4}{3}}, \chi_1}\\\nonumber
&&+\ C\sum_{|\alpha + \beta| \leq s, |\alpha| > |\beta|}
  \Big\|\Gamma^\alpha\Big(\frac{1 - n_1^2}{\sqrt{1 -
  n_1^2 - n_2^2}}\Big)\Gamma^\beta\Big\{\partial_\nu n_2
  \partial_\mu\Big(\frac{\partial^\mu n_1\partial^\nu n_2
  - \partial^\nu n_1\partial^\mu n_2}{\sqrt{1 - n_1^2
  - n_2^2}}\Big)\Big\}\Big\|_{L^{\frac{4}{3}}, \chi_1}\\\nonumber
&&+\ C\sum_{|\alpha + \beta| \leq s, |\alpha| > |\beta|}
  \Big\|\Gamma^\alpha\Big(\frac{n_1n_2}{\sqrt{1 -
  n_1^2 - n_2^2}}\Big)\Gamma^\beta\Big\{\partial_\nu n_1
  \partial_\mu\Big(\frac{\partial^\mu n_1\partial^\nu n_2
  - \partial^\nu n_1\partial^\mu n_2}{\sqrt{1 - n_1^2
  - n_2^2}}\Big)\Big\}\Big\|_{L^{\frac{4}{3}}, \chi_1}.
\end{eqnarray}
Next , we use Lemma \ref{lemc12} and Lemma \ref{lema32}, to obtain
\begin{eqnarray}\nonumber
&&\Big\|\frac{n_1}{1 - n_1^2 - n_2^2}
  \Big\|_{\Gamma, [\frac{s}{2}], L^\infty}\Big(\big\|
  \partial_\mu n_1\partial^\mu n_1 + \partial_\mu n_2\partial^\mu
  n_2\big\|_{\Gamma, s, L^{\frac{4}{3}}, \chi_1}\\\nonumber
&&+\ \big\|n_2^2\partial_\mu n_1\partial^\mu n_1 +
  n_1^2\partial_\mu n_2\partial^\mu n_2 - 2n_1n_2\partial_\mu
  n_1\partial^\mu n_2\big\|_{\Gamma, s,
  L^{\frac{4}{3}}, \chi_1}\Big)\\\nonumber
&&+\ \Big\|\frac{n_1}{1 - n_1^2 - n_2^2}
  \Big\|_{\Gamma, s, L^2}\Big(\big\|
  \partial_\mu n_1\partial^\mu n_1 + \partial_\mu n_2\partial^\mu
  n_2\big\|_{\Gamma, [\frac{s}{2}], L^4, \chi_1}\\\nonumber
&&+\ \big\|n_2^2\partial_\mu n_1\partial^\mu n_1 +
  n_1^2\partial_\mu n_2\partial^\mu n_2 - 2n_1n_2\partial_\mu
  n_1\partial^\mu n_2\big\|_{\Gamma, [\frac{s}{2}], L^4, \chi_1}\Big)\\\nonumber
&&\leq C\|(n_1, n_2)\|_{\Gamma, [\frac{s}{2}], L^\infty}\big[
  \|(\partial n_1, \partial n_2)\|_{\Gamma, s,
  L^2}\|(\partial n_1, \partial n_2)\|_{\Gamma,
  [\frac{s}{2}], L^4, \chi_1}\\\nonumber
&&\quad +\ \|(n_1, n_2)\|_{\Gamma, s,
  L^2}\|(\partial n_1, \partial n_2)\|_{\Gamma,
  [\frac{s}{2}], L^4, \chi_1}\|(n_1, n_2)\|_{\Gamma,
  [\frac{s}{2}] + 1, L^\infty}^2\big]\\\nonumber
&&\quad +\ C(1 + \tau)^{-\frac{1}{2}}\|(n_1, n_2)\|_{\Gamma, s,
  L^2}\Big(\big\|\partial_\mu n_1\partial^\mu n_1 + \partial_\mu n_2
   \partial^\mu n_2\big\|_{\Gamma, [\frac{s}{2}] + 1, L^2}\\\nonumber
&&\quad +\ \|(n_1, n_2)\|_{\Gamma, [\frac{s}{2}] + 1, L^\infty}^2
   \sum_{i, j = 1}^2\|\partial^\mu n_i\partial_\mu n_j
   \|_{\Gamma, [\frac{s}{2}] + 1, L^2}\Big)\\\nonumber
&&\leq C(1 + \tau)^{-\frac{1}{2}}\Big\{\|(n_1, n_2)\|_{\Gamma,
  [\frac{s}{2}], L^\infty}\|(\partial n_1, \partial n_2)\|_{\Gamma, s,
  L^2}\|(\partial n_1, \partial n_2)\|_{\Gamma, [\frac{s}{2}] + 1, L^2}\\\nonumber
&&\quad +\ (1 + \tau)^{-1}\|(n_1, n_2)\|_{\Gamma, s, L^2}
  \|(n_1, n_2)\|_{\Gamma, [\frac{s}{2}] + 2, L^\infty}
  \|(\partial n_1, \partial n_2)\|_{\Gamma, [\frac{s}{2}] + 1, L^2}\Big\}\\\nonumber
&&\leq C(M\epsilon)^3(1 + \tau)^{-1 + 2\delta}.
\end{eqnarray}
In a similar manner, one deduces that
\begin{eqnarray}\nonumber
&&\sum_{i = 1}^2\Big\|\partial_\nu n_i\partial_\mu
  \Big(\frac{\partial^\mu n_1\partial^\nu n_2 - \partial^\nu
  n_1\partial^\mu n_2}{\sqrt{1 - n_1^2 - n_2^2}}\Big)
  \Big\|_{\Gamma, s, L^{\frac{4}{3}}, \chi_1}\\\nonumber
&&\leq C\|(\partial n_1, \partial n_2)\|_{\Gamma, s,
  L^2}\Big\|\frac{\partial^\mu n_1\partial^\nu n_2 - \partial^\nu
  n_1\partial^\mu n_2}{\sqrt{1 - n_1^2 - n_2^2}}
  \Big\|_{\Gamma, [\frac{s}{2}] + 1, L^4, \chi_1}\\\nonumber
&&\quad +\ C\|(\partial n_1, \partial n_2)\|_{\Gamma,
  [\frac{s}{2}], L^4, \chi_1}\Big\|\partial\Big(\frac{\partial^\mu n_1
  \partial^\nu n_2 - \partial^\nu n_1\partial^\mu n_2}{\sqrt{1
  - n_1^2 - n_2^2}}\Big)\Big\|_{\Gamma, s, L^2}\\\nonumber
&&\leq C(1 + \tau)^{-\frac{1}{2}}\big[\|(\partial n_1,
  \partial n_2)\|_{\Gamma, s, L^2} + \|(\partial^2 n_1, \partial^2
  n_2)\|_{\Gamma, s, L^2}\big]\|\partial n_1\partial n_2\|_{\Gamma,
  [\frac{s}{2}] + 2, L^2}\\\nonumber
&&\leq C(M\epsilon)^3(1 + \tau)^{-1 + 2\delta},
\end{eqnarray}
and that
\begin{eqnarray}\nonumber
 &&\sum_{|\alpha + \beta| \leq s, |\alpha| > |\beta|}
  \Big\|\Gamma^\alpha\Big(\frac{1 - n_1^2}{\sqrt{1 -
  n_1^2 - n_2^2}}\Big)\Gamma^\beta\Big\{\partial_\nu n_2
  \partial_\mu\Big(\frac{\partial^\mu n_1\partial^\nu n_2
  - \partial^\nu n_1\partial^\mu n_2}{\sqrt{1 - n_1^2
  - n_2^2}}\Big)\Big\}\Big\|_{L^{\frac{4}{3}}, \chi_1}\\\nonumber
&&+\ C\sum_{|\alpha + \beta| \leq s, |\alpha| > |\beta|}
  \Big\|\Gamma^\alpha\Big(\frac{n_1n_2}{\sqrt{1 -
  n_1^2 - n_2^2}}\Big)\Gamma^\beta\Big\{\partial_\nu n_1
  \partial_\mu\Big(\frac{\partial^\mu n_1\partial^\nu n_2
  - \partial^\nu n_1\partial^\mu n_2}{\sqrt{1 - n_1^2
  - n_2^2}}\Big)\Big\}\Big\|_{L^{\frac{4}{3}}, \chi_1}\\\nonumber
&&\leq C\|(n_1, n_2)\|_{\Gamma, [\frac{s}{2}], L^\infty}\|(n_1,
  n_2)\|_{\Gamma, s, L^2}\Big\|\partial_\nu n_2
  \partial_\mu\Big(\frac{\partial^\mu n_1\partial^\nu n_2
  - \partial^\nu n_1\partial^\mu n_2}{\sqrt{1 - n_1^2
  - n_2^2}}\Big)\Big\|_{\Gamma, [\frac{s}{2}], L^4, \chi_1}\\\nonumber
&&\leq C(1 + \tau)^{-\frac{1}{2}}\|(n_1, n_2)\|_{\Gamma,
  [\frac{s}{2}], L^\infty}\|(n_1, n_2)\|_{\Gamma, s, L^2}\\\nonumber
&&\quad \times \Big\|\partial_\nu n_2
  \partial_\mu\Big(\frac{\partial^\mu n_1\partial^\nu n_2
  - \partial^\nu n_1\partial^\mu n_2}{\sqrt{1 - n_1^2
  - n_2^2}}\Big)\Big\|_{\Gamma, [\frac{s}{2}] + 1,
  L^2}\\\nonumber
&&\leq C(M\epsilon)^5(1 + \tau)^{- \frac{3}{2} + 2\delta}.
\end{eqnarray}
Therefore, we have
\begin{eqnarray}\label{p5-3}
\|f_1(\tau, \cdot)\|_{\Gamma, s, L^{\frac{4}{3}}, \chi_1} \leq
C(M\epsilon)^3(1 + \tau)^{-1 + 2\delta}.
\end{eqnarray}

Next, by  H${\rm \ddot{o}}$lder inequality and Theorem
\ref{thma21}, one can proceed as follows:
\begin{eqnarray}\nonumber
&&\|f_1(\tau, \cdot)\|_{\Gamma, s, L^{1, 2}, \chi_2}\\\nonumber
&&\leq C\Big\|\frac{n_1}{1 - n_1^2 - n_2^2}
  \Big\|_{\Gamma, [\frac{s}{2}], L^\infty}\Big(\big\|
  \partial_\mu n_1\partial^\mu n_1 + \partial_\mu n_2\partial^\mu
  n_2\big\|_{\Gamma, s, L^{1, 2}}\\\nonumber
&&+\ \big\|n_2^2\partial_\mu n_1\partial^\mu n_1 +
  n_1^2\partial_\mu n_2\partial^\mu n_2 - 2n_1n_2\partial_\mu
  n_1\partial^\mu n_2\big\|_{\Gamma, s,
  L^{1, 2}}\Big)\\\nonumber
&&+\ C\Big\|\frac{n_1}{1 - n_1^2 - n_2^2}
  \Big\|_{\Gamma, s, L^2}\Big(\big\|
  \partial_\mu n_1\partial^\mu n_1 + \partial_\mu n_2\partial^\mu
  n_2\big\|_{\Gamma, [\frac{s}{2}], L^{2, \infty}}\\\nonumber
&&+\ \big\|n_2^2\partial_\mu n_1\partial^\mu n_1 +
  n_1^2\partial_\mu n_2\partial^\mu n_2 - 2n_1n_2\partial_\mu
  n_1\partial^\mu n_2\big\|_{\Gamma, [\frac{s}{2}], L^{2, \infty}}\Big)\\\nonumber
&&+\ C\sum_{i = 1}^2\Big\|\partial_\nu n_i\partial_\mu
  \Big(\frac{\partial^\mu n_1\partial^\nu n_2 - \partial^\nu
  n_1\partial^\mu n_2}{\sqrt{1 - n_1^2 - n_2^2}}\Big)
  \Big\|_{\Gamma, s, L^{1, 2}}\\\nonumber
&&+\ C\Big\{\Big\|\Gamma\Big(\frac{1 - n_1^2}{\sqrt{1 -
  n_1^2 - n_2^2}}\Big)\Big\|_{\Gamma, s - 1, L^2} + \Big\|\Gamma\Big(\frac{n_1n_2}{\sqrt{1 -
  n_1^2 - n_2^2}}\Big)|\Big\|_{\Gamma, s - 1, L^2}\Big\}\\\nonumber
&&\quad \times \sum_{i = 1}^2\Big\|\partial_\nu
  n_i\partial_\mu\Big(\frac{\partial^\mu n_1\partial^\nu n_2
  - \partial^\nu n_1\partial^\mu n_2}{\sqrt{1 - n_1^2
  - n_2^2}}\Big)\Big\|_{\Gamma, [\frac{s}{2}], L^{1, \infty}}.
\end{eqnarray}
Noting that
\begin{equation}\nonumber
\begin{cases}
\big\|\partial_\mu n_1\partial^\mu n_1 + \partial_\mu
  n_2\partial^\mu n_2\big\|_{\Gamma, s, L^{1, 2}}\\
\quad\quad  \leq  C\|(\partial n_1, \partial n_2)\|_{\Gamma, s,
  L^2}\|(\partial n_1, \partial n_2)\|_{\Gamma, [\frac{s}{2}], L^{2,
  \infty}}\\
\quad\quad  \leq C\|(\partial n_1, \partial n_2)\|_{\Gamma, s,
  L^2}\|(\partial n_1, \partial n_2)
  \|_{\Gamma, [\frac{s}{2}] + 1, L^2},\\[-4mm]\\
\big\|\partial_\mu n_1\partial^\mu n_1 + \partial_\mu
  n_2\partial^\mu n_2\big\|_{\Gamma, [\frac{s}{2}], L^{2,
  \infty}}\\
\quad\quad \leq C\big\|\partial_\mu n_1\partial^\mu n_1 +
  \partial_\mu n_2\partial^\mu n_2\big\|_{\Gamma,
  [\frac{s}{2}] + 1, L^2}
\end{cases}
\end{equation}
and using Theorem \ref{thma21}, one can further estimate
\begin{eqnarray}\label{p5-4}
\|f_1(\tau, \cdot)\|_{\Gamma, s, L^{1, 2}, \chi_2} \leq
C(M\epsilon)^3(1 +  \tau)^{-\frac{1}{2} + 2\delta}.
\end{eqnarray}

By inserting \eqref{p5-3} into \eqref{p5-1}, one hence conclude
\begin{eqnarray}\label{p5-5}
&&\|n_1(t, \cdot)\|_{\Gamma, s, L^2} \leq C_\star\big[(1 +
  t)^{\frac{1}{2}}\big(\|(n_{10}, n_{20})\|_{H^{s +
  2}}\\\nonumber
&&\quad +\ \|(n_{11}, n_{22})\|_{H^{s + 1}}\big) + (M\epsilon)^3(1
+ t)^{\frac{1}{2} + 2\delta}\big].
\end{eqnarray}
Repeating the above analysis, one has
\begin{eqnarray}\label{p5-6}
&&\|n_2(t, \cdot)\|_{\Gamma, s, L^2} \leq C_\star\big[(1 +
  t)^{\frac{1}{2}}\big(\|(n_{10}, n_{20})\|_{H^{s +
  2}}\\\nonumber
&&\quad +\ \|(n_{11}, n_{22})\|_{H^{s + 1}}\big) + (M\epsilon)^3(1
+ t)^{\frac{1}{2} + 2\delta}\big].
\end{eqnarray}
By \eqref{p5-5} and \eqref{p5-6}, we see that the second inequality in
\eqref{Ea} is true provided that \eqref{p3-8} is satisfied.

\section{Energy Estimates}

This section is devoted to estimating $\sum_{i =
1}^2\big\|\partial^i\textbf{n}(t, \cdot)\big\|_{\Gamma, s, L^2}$
and proving the first inequality in \eqref{Ea}. We begin with the
following Lemma:

\begin{lem}\label{lemc13}
Let $n \geq 2$ and
\begin{equation}\nonumber
{\rm supp}\ v, {\rm supp}\ w \subset \{(t, x): |x| \leq t +
\rho\}.
\end{equation}
Then for all $t \geq 0$:
\begin{equation}\nonumber
\|v\partial w(t, \cdot)\|_{L^2} \leq C_\rho\|\nabla v(t,
\cdot)\|_{L^2}\|\Gamma w(t, \cdot)\|_{L^\infty}.
\end{equation}
\end{lem}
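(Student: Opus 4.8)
The key point is that on the support of $w$ we have $|x| \leq t + \rho$, so in the region where $|x|$ is comparable to $t$ (away from the origin) we can convert an ordinary derivative $\partial w$ into a combination of the vector fields $L_0, L_i, \Omega_{ij}$ divided by $t$ (roughly), gaining a factor $1/(1+t)$; in the region $|x| \lesssim 1+t$ (in fact the whole support) we have a ball of radius $\sim t+\rho$ available, so Sobolev/Hardy-type inequalities on that ball, together with the $1/(1+t)$ or $1/|x|$ weight, will absorb the $L^\infty$ factor of $\Gamma w$. More precisely, I would split the analysis into the region $\{|x| \leq \frac{1+t}{2}\}$ and $\{|x| > \frac{1+t}{2}\} \cap \{|x| \leq t+\rho\}$.

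First I would treat the inner region $|x| \leq \frac{1+t}{2}$. Here I would NOT try to gain decay from the vector fields; instead I bound $|v \partial w| \leq |v|\,|\partial w|$ pointwise and estimate
$\|v\,\partial w\|_{L^2(|x|\le \frac{1+t}{2})} \leq \|v\|_{L^2(|x|\le\frac{1+t}{2})}\cdot$ — no, that loses a derivative. The correct move is to use a Hardy-type inequality: write $v\partial w = v\partial w$ and integrate by parts, or rather use that $\partial w = \partial w$ and that on this region $\partial_t w = \frac{1}{t}(L_0 w - x_i\partial_i w)$... Actually the cleanest route, following Lemma 4.1 of Alinhac but adapted, is: on the whole support $\{|x|\le t+\rho\}$ one has $(t+\rho)^2 - |x|^2 \ge 0$, but that degenerates. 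So instead I use the plain inequality $|\partial w| \le C\sum|\Gamma w|$ combined with the weight. The actual argument: since $\mathrm{supp}\, v, \mathrm{supp}\, w \subset \{|x|\le t+\rho\}$, I estimate
$$\|v\partial w(t,\cdot)\|_{L^2} \le \|\partial w(t,\cdot)\|_{L^\infty}\|v(t,\cdot)\|_{L^2(|x|\le t+\rho)},$$
and then apply a Poincar\'e/Sobolev inequality on the ball $B_{t+\rho}$: since $v$ is supported in $B_{t+\rho}$, $\|v\|_{L^2(B_{t+\rho})} \le C(t+\rho)\|\nabla v\|_{L^2(B_{t+\rho})}$. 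This however produces a factor $(t+\rho)$, which is too lossy unless it is compensated. The compensation must come from converting $\partial w$ into $\Gamma w/(1+t)$: by Proposition \ref{propa12} (or directly from the identities $\partial_t = (tL_0 - x_iL_i)/(t^2-|x|^2)$ etc.) we only gain when $|t-|x||$ is large, which fails near the light cone.

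So the main obstacle is precisely handling the region near the light cone $|x|\approx t$, where $\partial w$ cannot be cheaply traded for $\Gamma w$. The resolution I expect: on that annular region $\frac{1+t}{2} \le |x| \le t+\rho$, the annulus has bounded \emph{radial} width $\sim t$ but we should instead use a one-dimensional Hardy/trace estimate in the radial variable exploiting that $v$ vanishes for $|x|>t+\rho$; writing $v(r\xi)^2 = -\int_r^{t+\rho}\partial_s(v(s\xi)^2)\,ds = -2\int_r^{t+\rho} v\,\partial_s v\,ds$ and combining with $\partial w = \partial w$ expressed through $\Gamma$'s where possible and through the good components otherwise. Concretely, I would write $v\partial w$ and integrate by parts in the radial direction to move the derivative from $w$ onto $v$: $\int |v\partial w|^2 \lesssim \int |\nabla v||w|\cdot|\partial w| + \dots$, using $\mathrm{supp}\,w \subset\{|x|\le t+\rho\}$ to kill the boundary term, then bound $|w|\le C(t+\rho)\|\Gamma w/(1+t)\|_{L^\infty}\lesssim \|\Gamma w\|_{L^\infty}$ via integrating the radial derivative of $w$ from the support boundary inward — this is where the $\Gamma$ (specifically $L_0$ or $r\partial_r$) and the finite propagation $|x|\le t+\rho$ combine to give exactly $\|w\|_{L^\infty}\lesssim \|\Gamma w\|_{L^\infty}$ with a constant independent of $t$ modulo $\rho$. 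Assembling the two regions and using Cauchy--Schwarz to close gives the claimed bound $\|v\partial w\|_{L^2}\le C_\rho\|\nabla v\|_{L^2}\|\Gamma w\|_{L^\infty}$; I would carry this out carefully since the independence of the constant from $t$ (only $\rho$-dependence) is the delicate point.
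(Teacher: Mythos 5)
Your inner region ($|x|\le\frac{1+t}{2}$) is fine: there $1+\big|t-|x|\big|\gtrsim 1+t$, so Proposition \ref{propa12} gives $|\partial w|\le \frac{C}{1+t}|\Gamma w|$, and the Poincar\'e inequality on $B_{t+\rho}$ (legitimate since $v$ vanishes outside that ball) supplies the compensating factor $t+\rho$. The genuine gap is in your outer region. First, integrating by parts in $\int v^2(\partial w)^2$ to ``move the derivative from $w$ onto $v$'' necessarily produces a term $\int v^2\, w\,\partial^2 w$, and a second derivative of $w$ is not controlled by $\|\Gamma w\|_{L^\infty}$; your ``$+\dots$'' hides exactly the term that breaks the estimate. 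Second, the bound $|w|\le C(t+\rho)\|\Gamma w/(1+t)\|_{L^\infty}\lesssim\|\Gamma w\|_{L^\infty}$ obtained by ``integrating the radial derivative inward from the support boundary'' is false with a $t$-independent constant: integrating $|\partial_r w|\le C(1+|t-r|)^{-1}\|\Gamma w\|_{L^\infty}$ from $r$ to $t+\rho$ costs $\int_r^{t+\rho}\frac{ds}{1+|t-s|}\sim\log(2+t)$, and indeed $w$ of the form (a cutoff of) $\log(2+t-|x|)$ has $\|\Gamma w\|_{L^\infty}\lesssim 1$ but $\|w\|_{L^\infty}\sim\log t$. So the outer-region argument as described does not close.

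The fix is simpler than what you attempt, and you already hold both ingredients. You have misdiagnosed the obstacle: near the light cone one does \emph{not} need to trade $\partial w$ for $\Gamma w/(1+t)$ at all. Apply Proposition \ref{propa12} \emph{uniformly} on the support, in the form $|\partial w|\le \frac{C_\rho}{\rho+|t-|x||}\,|\Gamma w|$ (near the cone this degenerates to the trivial $|\partial w|\le C|\Gamma w|$, since $\partial$ is itself one of the $\Gamma$'s, and that is all that is needed there). Then pull out $\|\Gamma w\|_{L^\infty}$ and put the entire weight on $v$:
\begin{equation}\nonumber
\|v\,\partial w\|_{L^2}\le C_\rho\Big\|\frac{v}{\rho+|t-|x||}\Big\|_{L^2}\,\|\Gamma w\|_{L^\infty}\le C_\rho\|\nabla v\|_{L^2}\,\|\Gamma w\|_{L^\infty},
\end{equation}
where the last step is precisely the radial Hardy inequality you wrote down in passing, $|v(r\xi)|^2=-2\int_r^{t+\rho}v\,\partial_s v\,ds$: since $v$ vanishes for $|x|>t+\rho$, an integration by parts in $r$ against the weight $(2\rho+t-r)^{-2}$ (the extra term coming from $d(r^{n-1})$ has a favorable sign) gives $\big\|v/(\rho+|t-|x||)\big\|_{L^2}\le C_\rho\|\nabla v\|_{L^2}$ with a constant depending only on $\rho$ and $n$. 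The point is that the conversion of $\|v\|$ into $\|\nabla v\|$ is done entirely by this Hardy inequality on $v$, which holds right up to the cone where the weight is merely bounded; no region splitting and no manipulation of $w$ beyond the pointwise Klainerman inequality is required.
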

\begin{proof}
By \eqref{a14} in Proposition \ref{propa12}, we have
\begin{eqnarray}\nonumber
&&\|v\partial w(t, \cdot)\|_{L^2} \leq C_\rho\Big\|\frac{v\Gamma
  w(t, \cdot)}{\rho + \big|t - |x|\big|}\Big\|_{L^2}\\\nonumber
&&\leq C_\rho\Big\|\frac{v}{\rho + \big|t - |x|\big|}
  \Big\|_{L^2}\|\Gamma w(t, \cdot)\|_{L^\infty}\\\nonumber
&&\leq C_\rho\|\nabla v\|_{L^2}\|\Gamma w(t, \cdot)\|_{L^\infty}.
\end{eqnarray}
Here we used the following Hardy's inequality
\begin{eqnarray}\nonumber
&&\Big\|\frac{v}{\rho + \big|t - |x|\big|}\Big\|_{L^2}^2
  \leq C_\rho\int_{|\xi| = 1}\int_0^{t + \rho}\frac{|v(r\xi)|^2}{\big(2\rho
  + t - r\big)^2} r^{n - 1}drdS\\\nonumber
&&= C_\rho\int_{|\xi| = 1}\int_0^{t + \rho}|v(r\xi)|^2
  r^{n - 1}d\frac{1}{\big(2\rho + t - r\big)}dS\\\nonumber
&&= - C_\rho\int_{|\xi| = 1}\int_0^{t + \rho}
  \frac{|v(r\xi)|^2}{2\rho + t - r} dr^{n - 1}dS\\\nonumber
&&\quad -\ C_\rho\int_{|\xi| = 1}\int_0^{t + \rho}
  \frac{2v(r\xi)v_r(r\xi)}{2\rho + t - r} r^{n - 1}drdS\\\nonumber
&&\leq - C_\rho\int_{|\xi| = 1}\int_0^{t + \rho}
  \frac{2v(r\xi)v_r(r\xi)}{2\rho + t - r} r^{n - 1}drdS\\\nonumber
&&\leq C_\rho\|\nabla v\|_{L^2}\Big\|\frac{v}{\rho + \big|t -
  |x|\big|}\Big\|_{L^2}.
\end{eqnarray}
\end{proof}

Now let us rewrite the Fadeev model \eqref{Faddeev-1} as
\begin{eqnarray}\label{Faddeev-3}
&&\partial_\mu\partial^\mu\textbf{n} +
(\partial_\mu\textbf{n}\cdot\partial^\mu\textbf{n})\textbf{n} +
\big[\partial_\mu\big(\textbf{n}\cdot[\partial^\mu\textbf{n}
\wedge \partial^\nu
\textbf{n}]\big)\big]\partial_\nu\textbf{n}\wedge \textbf{n} = 0.
\end{eqnarray}
For $|\alpha| \leq s$ and $i = 0$, 1, similarly as in
\eqref{p1-4}, one derives from \eqref{Faddeev-3} that
\begin{equation}\nonumber
\Box\partial^i\Gamma^\alpha \textbf{n} = - \sum_{\beta \leq
\alpha} C_{\alpha\beta}\partial^i\Gamma^\beta\Big\{
(\partial_\mu\textbf{n}\cdot\partial^\mu\textbf{n})\textbf{n} +
\big[\partial_\mu\big(\textbf{n}\cdot[\partial^\mu\textbf{n}\wedge
\partial^\nu\textbf{n}]\big)\big]\partial_\nu\textbf{n}\wedge
\textbf{n}\Big\}.
\end{equation}
For $i = 0$ and 1, taking the $L^2$ inner product of the above
equations with $\partial_t\partial^i\Gamma^\alpha \textbf{n}$
respectively and then adding them together, one has
\begin{eqnarray}\label{h1}
&&\frac{1}{2}\frac{d}{dt}\sum_{i = 0}^1\sum_{|\alpha| \leq s}
  \|(\partial_t, \nabla)\partial^i\Gamma^\alpha \textbf{n}\|_{L^2}^2\\\nonumber
&&= - \sum_{i = 0}^1\sum_{|\alpha| \leq s}\sum_{\beta
  \leq \alpha}C_{\alpha\beta}\Big\{\int\partial_t\partial^i\Gamma^\alpha
  \textbf{n}\cdot\partial^i\Gamma^\beta\big[(\partial_\mu\textbf{n}
  \cdot\partial^\mu\textbf{n})\textbf{n}\big]dx\\\nonumber
&&+\ \int\partial_t\partial^i\Gamma^\alpha \textbf{n}\cdot
  \partial^i\Gamma^\beta\Big[\big[\partial_\mu\big(\textbf{n}\cdot[
  \partial^\mu\textbf{n}\wedge\partial^\nu\textbf{n}]\big)\big]
  \partial_\nu\textbf{n}\wedge\textbf{n}\Big]dx\Big\}.
\end{eqnarray}

Let us estimate the first term on the right hand side of
\eqref{h1}. A straightforward calculation gives
\begin{eqnarray}\nonumber
&&- \sum_{i = 0}^1\sum_{|\alpha| \leq s}\sum_{\beta
  \leq \alpha}C_{\alpha\beta}\int\partial_t\partial^i\Gamma^\alpha
  \textbf{n}\cdot\partial^i\Gamma^\beta\big[(\partial_\mu\textbf{n}
  \cdot\partial^\mu\textbf{n})\textbf{n}\big]dx\\\nonumber
&&\leq C\sum_{i = 0}^1\sum_{|\alpha| \leq s}
  \sum_{\beta \leq \alpha}\big\|\partial_t\partial^i\Gamma^\alpha
  \textbf{n}\cdot\textbf{n}\big\|_{L^2}\big\|\partial^i\Gamma^\beta
  (\partial_\mu\textbf{n}\cdot\partial^\mu\textbf{n})\big\|_{L^2}\\\nonumber
&&\quad +\ C\sum_{i = 0}^1\sum_{|\alpha| \leq s}\sum_{\gamma \leq
  \beta \leq \alpha, j \leq i, j + |\gamma| \geq 1}\big\|\partial_t\partial^i
  \Gamma^\alpha\textbf{n}\big\|_{L^2}\\\nonumber
&&\quad \Big\{\sum_{j + |\gamma| \geq i - j + |\beta -
  \gamma|}\big\|\partial^j\Gamma^\gamma\textbf{n}\big\|_{L^2}
  \big\|\partial^{i - j}\Gamma^{\beta - \gamma}(\partial_\mu
  \textbf{n}\cdot\partial^\mu\textbf{n})\big\|_{L^\infty}\\\nonumber
&&\quad +\ \sum_{j + |\gamma| < i - j + |\beta - \gamma|
  }\big\|\partial^j\Gamma^\gamma\textbf{n}\big\|_{L^\infty}
  \big\|\partial^{i - j}\Gamma^{\beta - \gamma}
  (\partial_\mu \textbf{n}\cdot\partial^\mu
  \textbf{n})\big\|_{L^2}\Big\}.
\end{eqnarray}
Noting the null structure of the nonlinearity and using \eqref{E},
Lemma \ref{lemd11} and Lemma \ref{lemd12}, we compute
\begin{eqnarray}\label{h2}
&&\sum_{i = 0}^1\sum_{|\alpha| \leq s}\sum_{\gamma \leq
  \beta \leq \alpha, j \leq i, j + |\gamma| \geq 1}\big\|\partial_t\partial^i
  \Gamma^\alpha\textbf{n}\big\|_{L^2}\\\nonumber
&&\quad \Big\{\sum_{j + |\gamma| \geq i - j + |\beta -
  \gamma|}\big\|\partial^j\Gamma^\gamma\textbf{n}\big\|_{L^2}
  \big\|\partial^{i - j}\Gamma^{\beta - \gamma}(\partial_\mu
  \textbf{n}\cdot\partial^\mu\textbf{n})\big\|_{L^\infty}\\\nonumber
&&\quad +\ \sum_{j + |\gamma| < i - j + |\beta - \gamma|
  }\big\|\partial^j\Gamma^\gamma\textbf{n}\big\|_{L^\infty}
  \big\|\partial^{i - j}\Gamma^{\beta - \gamma}
  (\partial_\mu \textbf{n}\cdot\partial^\mu
  \textbf{n})\big\|_{L^2}\Big\}.\\\nonumber
&&\leq C(M\epsilon)^4(1 + t)^\delta(1 + t)^{\frac{1}{2} +
  2\delta}(1 + t)^{-2} + C(M\epsilon)^4(1 + t)^\delta(1 + t)^{-
  1 + \delta}\\\nonumber
&&\leq C(M\epsilon)^4(1 + t)^{- 1 + 2\delta}.
\end{eqnarray}
On the other hand, by $\textbf{n}\cdot \textbf{n} = 1$ (which
means $\textbf{n}\cdot \textbf{n}_t = 0$), one has
\begin{eqnarray}\label{h3}
&&\textbf{n}\cdot\partial^i\Gamma^\alpha\partial_t\textbf{n} =
  -\big[\partial^i\Gamma^\alpha (\textbf{n}_t\cdot\textbf{n}) -
  \textbf{n}\cdot\partial^i\Gamma^\alpha
  \partial_t\textbf{n}\big]\\\nonumber
&&= - \sum_{0 \leq j \leq i, \beta \leq \alpha, j + |\beta| \geq
  1}C_{j\beta}\partial^{i - j}\Gamma^{\alpha - \beta}
  \textbf{n}_t\cdot\partial^j\Gamma^\beta\textbf{n}.
\end{eqnarray}
Consequently, a similar argument as in \eqref{h2} gives
\begin{eqnarray}\nonumber
&&\sum_{i = 0}^1\sum_{|\alpha| \leq s}
  \sum_{\beta \leq \alpha}\big\|\partial_t\partial^i\Gamma^\alpha
  \textbf{n}\cdot\textbf{n}\big\|_{L^2}\big\|\partial^i\Gamma^\beta
  (\partial_\mu\textbf{n}\cdot\partial^\mu\textbf{n})\big\|_{L^2}\\\nonumber
&&\leq C(M\epsilon)^2(1 + t)^{- \frac{1}{2} + \delta}\sum_{i
  = 0}^1\sum_{|\alpha| \leq s}\sum_{\beta \leq \alpha, |\beta| \geq s
  - 2}\big\|\partial^{i}\Gamma^{\alpha - \beta}\textbf{n}_t\cdot
  \Gamma^\beta\textbf{n}\big\|_{L^2}\\\nonumber
&&\quad +\ C(M\epsilon)^4(1 + t)^{- 1 + 2\delta}.
\end{eqnarray}
Using Lemma \ref{lemc13}, one can bound the right hand side of
above equality by
\begin{eqnarray}\nonumber
C(M\epsilon)^4(1 + t)^{- 1 + 2\delta}.
\end{eqnarray}
Finally, one has
\begin{eqnarray}\label{h4}
- \sum_{i = 0}^1\sum_{|\alpha| \leq s}\sum_{\beta \leq
\alpha}C_{\alpha\beta}\int\partial_t\partial^i\Gamma^\alpha
\textbf{n}\cdot\partial^i\Gamma^\beta\big[(\partial_\mu\textbf{n}
\cdot\partial^\mu\textbf{n})\textbf{n}\big]dx \leq
C(M\epsilon)^4(1 + t)^{- 1 + 2\delta}.
\end{eqnarray}

To estimate the right hand side of \eqref{h1}, it remains to bound
\begin{eqnarray}\nonumber
- \sum_{i = 0}^1\sum_{|\alpha| \leq s}\sum_{\beta \leq
\alpha}C_{\alpha\beta}\Big\{\int\partial_t\partial^i\Gamma^\alpha
\textbf{n}\cdot\partial^i\Gamma^\beta\Big[\big[\partial_\mu
\big(\textbf{n}\cdot[
\partial^\mu\textbf{n}\wedge \partial^\nu\textbf{n}]\big)\big]
  \partial_\nu\textbf{n}\wedge\textbf{n}\Big]dx\Big\}.
\end{eqnarray}
By a similar argument as \eqref{h4}, one can bound the above
quantity by
\begin{eqnarray}\nonumber
- \sum_{i = 0}^1\int\partial_t\partial^i\Gamma^s\textbf{n}
  \cdot(\partial_\nu\textbf{n}\wedge\textbf{n})
  \partial_\mu\big(\textbf{n}\cdot\partial^i\Gamma^s[\partial^\mu
  \textbf{n}\wedge\partial^\nu \textbf{n}]\big)dx + C(M\epsilon)^4(1 + t)^{- 1 + 2\delta},
\end{eqnarray}
which is equal to
\begin{eqnarray}\nonumber
&&\sum_{i = 0}^1\int\partial_t\partial^i\Gamma^s\textbf{n}
  \cdot(\nabla\textbf{n}\wedge\textbf{n})
  \partial_t\big(\textbf{n}\cdot\partial^i\Gamma^s[\partial_t
  \textbf{n}\wedge\nabla \textbf{n}]\big)dx\\\nonumber
&&+ \sum_{i = 0}^1\int\partial_t\partial^i\Gamma^s\textbf{n}
  \cdot(\partial_t\textbf{n}\wedge\textbf{n})
  \nabla\big(\textbf{n}\cdot\partial^i\Gamma^s[\nabla
  \textbf{n}\wedge\partial_t \textbf{n}]\big)dx\\\nonumber
&&- \sum_{i = 0}^1\int\partial_t\partial^i\Gamma^s\textbf{n}
  \cdot(\partial_1\textbf{n}\wedge\textbf{n})
  \partial_2\big(\textbf{n}\cdot\partial^i\Gamma^s[\partial_2
  \textbf{n}\wedge\partial_1 \textbf{n}]\big)dx\\\nonumber
&&- \sum_{i = 0}^1\int\partial_t\partial^i\Gamma^s\textbf{n}
  \cdot(\partial_2\textbf{n}\wedge\textbf{n})
  \partial_1\big(\textbf{n}\cdot\partial^i\Gamma^s[\partial_1
  \textbf{n}\wedge\partial_2 \textbf{n}]\big)dx\\\nonumber
&&+\ C(M\epsilon)^4(1 + t)^{- 1 + 2\delta}.
\end{eqnarray}
Now let us rewrite the above quantity as
\begin{eqnarray}\label{h5}
&&\sum_{i = 0}^1\int\big((\partial_t\partial^i\Gamma^s
  \textbf{n}\wedge\nabla\textbf{n})\cdot\textbf{n}\big)
  \partial_t\big(\textbf{n}\cdot\partial^i\Gamma^s[\partial_t
  \textbf{n}\wedge\nabla \textbf{n}]\big)dx\\\nonumber
&&- \sum_{i = 0}^1\int\nabla\big((\partial_t\textbf{n}
  \wedge\partial_t\partial^i\Gamma^s\textbf{n})\cdot\textbf{n}\big)
  \big(\textbf{n}\cdot\partial^i\Gamma^s[\partial_t
  \textbf{n}\wedge\nabla \textbf{n}]\big)dx\\\nonumber
&&+ \sum_{i = 0}^1\int\partial_2\big((\partial_1\textbf{n}\wedge
  \partial_t\partial^i\Gamma^s\textbf{n})\cdot\textbf{n}\big)
  \big(\textbf{n}\cdot\partial^i\Gamma^s[\partial_1
  \textbf{n}\wedge\partial_2 \textbf{n}]\big)dx\\\nonumber
&&+ \sum_{i = 0}^1\int\partial_1\big((\partial_t\partial^i
  \Gamma^s\textbf{n}\wedge\partial_2\textbf{n})\cdot\textbf{n}\big)
  \big(\textbf{n}\cdot\partial^i\Gamma^s[\partial_1
  \textbf{n}\wedge\partial_2 \textbf{n}]\big)dx\\\nonumber
&&+\ C(M\epsilon)^4(1 + t)^{- 1 + 2\delta}.
\end{eqnarray}
Similarly as in \eqref{h4}, one can estimate \eqref{h5} by
\begin{eqnarray}\label{h6}
&&\sum_{i = 0}^1\frac{1}{2}\frac{d}{dt}\Big(\big\|
  \textbf{n}\cdot\partial^i\Gamma^s[\partial_t
  \textbf{n}\wedge\nabla \textbf{n}]\big\|_{L^2}^2 + \big\|
  \textbf{n}\cdot\partial^i\Gamma^s[\partial_1
  \textbf{n}\wedge\partial_2 \textbf{n}]\big\|_{L^2}^2\\\nonumber
&&-\ 2\int\big((\partial_t\textbf{n}
  \wedge\partial^i\Gamma^s\nabla\textbf{n})\cdot\textbf{n}\big)
  \big(\textbf{n}\cdot\partial^i\Gamma^s[\partial_t
  \textbf{n}\wedge\nabla \textbf{n}]\big)dx\Big)
  + C(M\epsilon)^4(1 + t)^{- 1 + 2\delta}\\\nonumber
&&\leq \sum_{i = 0}^1\frac{1}{2}\frac{d}{dt}\Big(\big\|
  \textbf{n}\cdot\partial^i\Gamma^s[\partial_1
  \textbf{n}\wedge\partial_2 \textbf{n}]\big\|_{L^2}^2
  + \big\|(\partial^i\Gamma^s\partial_t\textbf{n}\wedge\nabla
  \textbf{n})\cdot\textbf{n}\big\|_{L^2}^2\\\nonumber
&&-\ \big\|(\partial_t\textbf{n} \wedge\partial^i\Gamma^s
  \nabla\textbf{n})\cdot\textbf{n}\big\|_{L^2}^2\Big)
  + C(M\epsilon)^4(1 + t)^{- 1 + 2\delta}.
\end{eqnarray}
Inserting \eqref{h4} and \eqref{h6} into \eqref{h1}, we finally
arrive at
\begin{eqnarray}\label{h7}
&&\frac{1}{2}\frac{d}{dt}\sum_{i = 0}^1\Big(\sum_{|\alpha| \leq s}
  \|(\partial_t, \nabla)\partial^i\Gamma^\alpha \textbf{n}\|_{L^2}^2
  - \big\|\textbf{n}\cdot\partial^i\Gamma^s[\partial_1
  \textbf{n}\wedge\partial_2 \textbf{n}]\big\|_{L^2}^2\\\nonumber
&&-\ \big\|(\partial^i\Gamma^s\partial_t\textbf{n}\wedge\nabla
  \textbf{n})\cdot\textbf{n}\big\|_{L^2}^2
 + \big\|(\partial_t\textbf{n} \wedge\partial^i\Gamma^s
  \nabla\textbf{n})\cdot\textbf{n}\big\|_{L^2}^2\Big)\\\nonumber
&&\leq  C(M\epsilon)^4(1 + t)^{- 1 + 2\delta}.
\end{eqnarray}
This completes the proof of the first inequality in \eqref{Ea}.

\section*{Acknowledgement}
Zhen Lei was in part supported by NSFC (grants No. 10801029 and
10911120384), FANEDD, Shanghai Rising Star Program (10QA1400300),
SGST 09DZ2272900 and SRF for ROCS, SEM. Yi Zhou was partially
supported by the NSFC grant 10728101, the 973 project of the
Ministry of Science and Technology of China, the Doctoral Program
Foundation of the Ministry of Education of China, the "111"
project (B08018) and SGST 09DZ2272900. Fanghua Lin is partially
supported by an NSF grant. Part of the work was carried out when
Zhen Lei was visiting the Courant Institute.


\end{document}